\theoremstyle{plain}
\date{\today}
\title[Multiple summing maps]{Multiple summing maps: coordinatewise summability, inclusion theorems and $p$-Sidon sets}
\author{Fr\'{e}d\'{e}ric Bayart} \address{Universit\'e Clermont Auvergne, CNRS, LMBP, F-63000 Clermont-Ferrand, France.} \email{frederic.bayart@uca.fr}
\subjclass[2010]{46G25,47H60}
\keywords{Multiple summing operators, multilinear mappings, Sidon sets}
\newcommand{\veps}{\varepsilon}
\def\bi{\mathbf i}
\def\bj{\mathbf j}
\def\bk{\mathbf k}
\newtheorem{theorem}{Theorem}[section]
\newtheorem{lemma}[theorem]{Lemma}
\newtheorem{proposition}[theorem]{Proposition}
\newtheorem{corollary}[theorem]{Corollary}
\theoremstyle{definition}}
\theoremstyle{definition}}
\theoremstyle{definition}}
\theoremstyle{definition}}
\theoremstyle{definition}}
\theoremstyle{definition}\newtheorem{remark}[theorem]{Remark}}
\newtheorem*{DPS}{Theorem (Defant, Popa, Schwarting)}
\begin{document}

\begin{abstract}
We discuss the multiple summability of a multilinear map $T:X_1\times\cdots\times X_m\to Y$ when we have informations on the summability of the maps it induces on each coordinate. Our methods have applications to inclusion theorems for multiple summing multilinear mappings and to the product of $p$-Sidon sets.
\end{abstract}

\maketitle

\section{Introduction}
\subsection{Multiple and coordinatewise summability} Let $T:X\to Y$ be linear where $X$ and $Y$ are Banach spaces. For $r, p\geq 1$, we say that $T$ is $(r,p)$-summing if there exists a constant $C>0$ such 
that, for any sequence $x=(x_i)_{i\in\mathbb N}\subset X^\mathbb N$, 
$$\left(\sum_{i=1}^{+\infty} \|T(x_i)\|^r\right)^{\frac 1r}\leq Cw_p(x)$$
where the weak $\ell^p$-norm of $x$ is defined by
\begin{eqnarray*}
 w_p(x)&=&\sup_{\|x^*\|\leq 1}\left(\sum_{i=1}^{+\infty} |x^*(x_i)|^p\right)^{\frac 1p}.
\end{eqnarray*}
The theory of $(r,p)$-summing operators is very rich and very important in Banach space theory (see \cite{DJT} for details). In recent years, the interest moves to
multilinear maps. We start now from $m\geq 1$, $X_1,\dots,X_m$, $Y$ Banach spaces and $T:X_1\times\cdots\times X_m\to Y$ $m$-linear. Following
\cite{BPV04} and \cite{Mat03}, for $r\geq 1$ and $\mathbf p=(p_1,\dots,p_m)\in [1,+\infty)^m$, we say that $T$ is multiple $(r,\mathbf p)$-summing
if there exists a constant $C>0$ such that for all sequences $x(j)\subset X_j^{\mathbb N}$, $1\leq j\leq m$, 
$$\left(\sum_{\bi\in\mathbb N^m}\|T(x_\bi)\|^r\right)^{\frac 1r}\leq Cw_{p_1}(x(1))\cdots w_{p_m}(x(m))$$
where $T(x_\bi)$ stands for $T(x_{i_1}(1),\dots,x_{i_m}(m))$. The least constant $C$ for which the inequality holds is denoted by $\pi_{r,\mathbf p}^{\textrm{mult}}(T)$.
When all the $p_i$'s are equal to the same $p$, we will simply say that $T$ is multiple $(r,p)$-summing.

Even if the notion of multiple summing mappings was formalized only recently, its roots go back to an inequality of Bohnenblust and Hille 
appeared in 1931 (see \cite{BH31}). Using the reformulation of \cite{PGVil04}, this inequality says that every $m$-linear \emph{form} $T:X_1\times\cdots\times X_m\to\mathbb K$
is multiple $(2m/(m+1),1)$-summing. Observe that the restriction of $T$ to each $X_k$ (fixing the other coordinates) is, as all linear forms, $(1,1)$-summing. This motivates
the authors of \cite{DPS10} to study the following question: let $T:X_1\times\cdots\times X_m\to Y$ be $m$-linear and assume that the restriction of $T$
to each $X_k$ is $(r,p)$-summing (we will say that $T$ is \emph{separately summing}). Can we say something about the multiple $(s,t)$-summability of $T$?
The authors of \cite{DPS10} get a successful answer in the case $p=t=1$ (their results were later improved and simplified in \cite{PopSinna13} and in \cite{ANSS15}).
Precisely, they showed the following result:
\begin{DPS}
Let $T:X_1\times \cdots\times X_m\to Y$ be $m$-linear with $Y$ a cotype $q$ space. Let $r\in [1,q]$ and assume that $T$ is separately $(r,1)$-summing. Then $T$ is multiple $(s,1)$-summing, with
$$\frac 1s=\frac{m-1}{mq}+\frac{1}{mr}.$$
\end{DPS}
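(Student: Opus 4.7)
The exponent $s$ defined by $1/s = (m-1)/(mq) + 1/(mr)$ is the natural one obtained by averaging $1/q$ with weight $(m-1)/m$ and $1/r$ with weight $1/m$. This suggests establishing, for each coordinate $k \in \{1, \ldots, m\}$, a mixed-norm estimate in which the $k$-th direction is handled in $\ell^r$ (from the separate summing hypothesis) while the other $m-1$ directions are handled in $\ell^q$ (from cotype), and then combining the $m$ resulting estimates through a Blei-type multi-index interpolation.

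\emph{Step 1.} Fix $k$, say $k=1$, and sequences $(x_i(j))$. The plan is to iteratively apply cotype $q$ of $Y$ in each coordinate $j = m, m-1, \ldots, 2$. Each application replaces the slot $x_{i_j}(j)$ by the Rademacher sum $a_j(t_j) = \sum_{i_j} \veps_{i_j}(t_j) x_{i_j}(j)$ (of norm at most $w_1(x(j))$), and introduces an $L^2(dt_j)$-expectation; multilinearity of $T$ ensures the expression remains linear in each variable not yet treated, legitimizing the iteration. Applied to a Rademacher sum in $Y$, Kahane's inequality converts each $L^2$-average into an $L^r$-average at the cost of a constant depending on $m, q, r$. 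After Fubini, raising to the $r$-th power and summing over $i_1$, one is left with $\int \sum_{i_1} \|T(x_{i_1}(1), a_2(t_2), \ldots, a_m(t_m))\|^r\, d\mathbf{t}$, to which the separate $(r,1)$-summing hypothesis in the first variable applies for each fixed $\mathbf{t}$. Using $\|a_j(t_j)\| \leq w_1(x(j))$ one obtains
$$\left(\sum_{i_1}\Bigl(\sum_{i_2, \ldots, i_m} \|T(x_\bi)\|^q\Bigr)^{r/q}\right)^{1/r} \leq C \prod_{j=1}^m w_1(x(j)),$$
with $C$ depending on $C_q(Y)$, the Kahane constants, $m$, and $\pi^{\mathrm{sep}}_{r,1}(T)$. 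Repeating the argument for each $k$ gives the analogous mixed-norm bound with $\ell^r$ in direction $k$ and $\ell^q$ in the others.

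\emph{Step 2.} The $m$ mixed-norm bounds are combined via a multi-index Blei-type interpolation: for a scalar array $(c_\bi)_{\bi \in \NN^m}$, if $\|c\|_{\ell^r_{i_k}(\ell^q_{\bi_{-k}})} \leq M$ for every $k$, then $\|c\|_{\ell^s(\NN^m)} \leq M$ with $1/s = (m-1)/(mq) + 1/(mr)$. The case $m=2$ follows from complex interpolation between $\ell^r(\ell^q)$ and $\ell^q(\ell^r)$, which meet at $\ell^s$ at the midpoint; the general case is obtained by induction on $m$. Applied to $c_\bi = \|T(x_\bi)\|$, this yields the desired multiple $(s,1)$-summing estimate.

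\emph{Main obstacle.} Step 1 is essentially bookkeeping once one accepts iterated cotype, Kahane, and Fubini; the subtle point is verifying that Kahane's inequality, applied iteratively over $m-1$ independent Rademacher systems, provides a finite constant (depending on $m,q,r$) for converting the nested $L^q$-averages to a single $L^r$-average. The conceptual heart of the argument is Step 2: the specific exponent $s$ is dictated precisely by the Blei inequality, and everything in Step 1 is calibrated to produce mixed norms with exponents $(r, q, \ldots, q)$ exactly matching its hypotheses.
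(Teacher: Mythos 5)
Your proposal follows essentially the same route as the paper: Step 1 is the cotype-plus-Kahane argument from the proof of Theorem \ref{thm:main1} (which, in this $p=1$ case where $p^*=\infty$, reduces exactly to the unweighted mixed-norm bounds with $\ell^r$ in one direction and $\ell^q$ in the others), and Step 2 is the Blei--Popa--Sinnamon mixed-norm inequality that the paper invokes as Lemma \ref{lem:popa}, with $r_j=r$ for all $j$ giving precisely $\frac1s=\frac{m-1}{mq}+\frac1{mr}$. The only difference is cosmetic: the paper cites Lemma \ref{lem:popa} from the literature, whereas you sketch a proof of it by complex interpolation of mixed-norm spaces and induction.
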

We intend in this paper to fill out the picture by allowing the full range of possible values for $t$ and $p$, namely $t\geq p\geq 1$.
The following result is a more readable corollary of our main theorems, Theorems \ref{thm:main1}, \ref{thm:main2}, \ref{thm:main3}, \ref{thm:opti1} ($p^*$ will denote the conjugate
exponent of $p$).

\begin{theorem}\label{thm:intro}
 Let $T:X_1\times\cdots\times X_m\to Y$ be $m-$linear with $Y$ a cotype $q$ space. Assume that $T$ is separately $(r,p)$-summing and let $t\geq p$. 
 \begin{itemize}
 \item If $\frac 1r+\frac{1}{p^*}-\frac{m}{t^*}>\frac 1q$, then 
 $T$ is multiple $(s,t)$-summing with 
 $$\frac 1s=\frac{m-1}{mq}+\frac 1{mr}+\frac{1}{mp^*}-\frac{1}{t^*}.$$
 \item If $0<\frac 1r+\frac{1}{p^*}-\frac{m}{t^*}\leq \frac 1q$, then $T$ is multiple $(s,t)$-summing with
 $$ \frac1s=\frac1r+\frac 1{p^*}-\frac{m}{t^*}.$$
 \end{itemize}
 When $1\leq p=t\leq 2$ and $q=2$, the above values of $s$ are optimal.
\end{theorem}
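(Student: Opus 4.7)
Since the author explicitly presents Theorem~\ref{thm:intro} as a readable corollary of the main theorems of the paper, my plan is to describe how each of the two summability regimes and the optimality assertion should be extracted from those results. The starting observation is that the dichotomy is continuous: a short computation gives
\[
\frac{m-1}{mq}+\frac 1{mr}+\frac{1}{mp^*}-\frac{1}{t^*} - \left(\frac 1r+\frac{1}{p^*}-\frac{m}{t^*}\right)=\frac{m-1}{m}\left(\frac 1q-\frac 1r-\frac 1{p^*}+\frac m{t^*}\right),
\]
so the two candidate formulas for $\frac 1s$ agree precisely on the hyperplane $\frac 1r+\frac{1}{p^*}-\frac{m}{t^*}=\frac 1q$. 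In the first regime the cotype floor $\frac{m-1}{mq}$ is binding and prevents one from reaching the more stringent multiplicative exponent; in the second regime that exponent already sits above the cotype floor, so no cotype refinement is available or needed.

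For the cotype regime $\frac 1r+\frac{1}{p^*}-\frac{m}{t^*}>\frac 1q$, I would follow the Defant--Popa--Schwarting strategy in the form I expect underlies Theorems~\ref{thm:main1}--\ref{thm:main2}. Fixing finite families $(x_{i_k}(k))_{i_k}\subset X_k$, one applies the separate $(r,p)$-summability successively with each of the $m$ coordinates distinguished; each such application produces an $\ell^r$-estimate in the distinguished index weighted by $w_p$ of the corresponding family, keeping the other variables fixed. Averaging these $m$ choices of distinguished coordinate and combining the bounds via a Khintchine--Kahane--type argument that exploits the cotype $q$ of $Y$ on the $m-1$ non-distinguished indices converts those indices to $\ell^q$-sums, and this is precisely where the combination $\frac{m-1}{mq}+\frac{1}{mr}$ arises. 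The cost of trading weak $\ell^p$ for weak $\ell^t$ on each coordinate is $\frac{1}{p^*}-\frac{1}{t^*}$ by H\"older; averaging this gain over the $m$ coordinates produces the correction $\frac{1}{mp^*}-\frac{1}{t^*}$, yielding the announced value of $s$.

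For the second regime $0<\frac 1r+\frac{1}{p^*}-\frac{m}{t^*}\le \frac 1q$, the cotype barrier is inactive and a pure inclusion argument (Theorem~\ref{thm:main3}) should suffice. The classical inclusion for ordinary summing operators lets one pass from $(r,p)$-summability on one coordinate to $(s',t)$-summability with $\frac{1}{s'}=\frac{1}{r}+\frac{1}{p^*}-\frac{1}{t^*}$; iterating this coordinate by coordinate and exchanging sums by Minkowski yields $\frac{1}{s}=\frac{1}{r}+\frac{1}{p^*}-\frac{m}{t^*}$ without any appeal to the cotype of $Y$, which explains why no $q$ appears in the formula for this regime.

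The optimality statement under $1\le p=t\le 2$ and $q=2$ should be extracted from Theorem~\ref{thm:opti1}. My plan here is to test Theorem~\ref{thm:intro} against explicit $m$-linear maps built from $p$-Sidon sets, as suggested by the abstract, or against diagonal multilinear forms on $\ell^p$ taking values in a cotype~$2$ space. The main obstacle---and presumably the heart of Theorem~\ref{thm:opti1}---is to manufacture a single test family that saturates the separate $(r,p)$-summability hypothesis while simultaneously forcing the multiple $(s,t)$-summability exponent down to its claimed value; once such a family is available, the matching lower bound follows from direct estimates of $w_t$ for a Sidon-type system together with a sharp computation of the multiple summing norm on the corresponding diagonal form.
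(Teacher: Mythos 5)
There are genuine gaps. For the first regime, your reduction ``trade weak $\ell^p$ for weak $\ell^t$ at cost $\frac1{p^*}-\frac1{t^*}$'' is indeed how the paper starts (the linear inclusion theorem turns separate $(r,p)$-summability into separate $(\rho,t)$-summability with $\frac1\rho=\frac1r+\frac1{p^*}-\frac1{t^*}$), but after that the Defant--Popa--Schwarting scheme you invoke (distinguish one coordinate, Khintchine--Kahane plus cotype on the others, combine) only handles weak-$\ell_1$ data on the non-distinguished coordinates: it yields multiple $(s,1)$-summability, and ``averaging the H\"older gain over the $m$ coordinates'' is numerology, not a mechanism --- it would produce $\frac1m\bigl(\frac1{p^*}-\frac1{t^*}\bigr)$ or $\frac1{p^*}-\frac1{t^*}$, never $\frac1{mp^*}-\frac1{t^*}$. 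The missing content is precisely the paper's Section~3 machinery: one multiplies the non-distinguished coordinates by weights in $B_{\ell_{t^*}}$ so as to use the coordinatewise hypothesis, and then removes the weights at a computable loss in the exponent via the non-negative Praciano-Pereira inequality (Proposition \ref{prop:praciano}) fed into the abstract Hardy--Littlewood lemma (Lemma \ref{lem:hardylittlewood}, iterated in Proposition \ref{prop:main}), before combining the $m$ distinguished choices through the Blei/Popa--Sinnamon mixed-norm inequality (Lemma \ref{lem:popa}). Without that weight-removal step the exponent $\frac{m-1}{mq}+\frac1{mr}+\frac1{mp^*}-\frac1{t^*}$ cannot be reached.

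The second regime is also misdescribed: separate summability cannot be iterated ``coordinate by coordinate with Minkowski'' --- that multiple summability does not follow trivially from separate summability is the whole point of the subject --- and the regime is not cotype-free. The relevant exponent that must exceed $q$ is $\gamma$ with $\frac1\gamma=\frac1r+\frac1{p^*}-\frac m{t^*}$, not $r$, so Theorem \ref{thm:main3} alone does not apply; the paper goes through Corollary \ref{cor:main}(2), i.e. Theorem \ref{thm:main2} (which again uses the Kahane/cotype step and Proposition \ref{prop:main} on a sub-family of coordinates) followed by Lemma \ref{lem:displike}, whose proof uses that $\ell_r(Y)$ has cotype $r\ge q$ together with Proposition \ref{prop:praciano} once more. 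Finally, for optimality you only announce a plan; the paper's argument is concrete and different from what you suggest: it takes $m$-linear maps $A:\ell_{p^*}\times\cdots\times\ell_{p^*}\to\ell_u$ composed with the inclusion $I_{u,2}:\ell_u\to\ell_2$, uses the Bennett--Carl inequalities to guarantee separate $(r,p)$-summability with $\frac1r=\frac1u+\frac1p-1$, and quotes the known optimal multiple summability exponents for $I_{u,2}\circ A$ from \cite{DiSP14} and \cite{ABPS14}; Sidon-set constructions are used in the paper only for the optimality of Theorem \ref{thm:sidon}, not here.
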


\subsection{Inclusion theorems} Our methods have other interesting consequences. A basic result in the theory of $(r,p)$-summing operators is the inclusion theorem: if $T\in\mathcal L(X,Y)$ is $(r,p)$-summing,
then it is also $(s,q)$-summing provided $s\geq r$ and $\frac1s-\frac 1q\leq \frac 1r-\frac 1p$. The proof of this result follows from a simple application of H\"older's inequality.

In the multilinear case, the situation seems more involved. Using probability in a clever way, P\'erez-Garc\'ia in \cite{PG04} succeeded to prove that if $T\in\mathcal L(X_1,\dots,X_m;Y)$ is
$(p,p)$-summing, $p\in [1,2)$, then it is also $(q,q)$-summing for $q\in [p,2)$. However, this result is not very helpful to provide inclusion theorems for $(r,p)$-summing multilinear maps as those
coming from the Bohnenblust-Hille inequality.

The next result seems to be a natural multilinear analogue to the linear inclusion theorem.  It already appeared in \cite[Proposition 3.4]{PSST17} in the particular case
where all the $p_i$ are equal, with a different proof. Its optimality will be discussed in Theorem \ref{thm:opti2}.

\begin{theorem}\label{thm:inclusion}
 Let $T:X_1\times\cdots\times X_m\to Y$ be $m$-linear, let $r,s\in [1,+\infty)$, $\mathbf p,\mathbf q\in[1,+\infty)^m$. Assume that $T$ is multiple $(r,\mathbf p)$-summing,
 that $q_k\geq p_k$ for all $k=1,\dots,m$ and that $\frac 1r-\sum_{j=1}^m\frac1{p_j}+\sum_{j=1}^m \frac1{q_j}>0$. Then $T$ is multiple $(s,\mathbf q)$-summing, with
 $$\frac 1s-\sum_{j=1}^m\frac1{q_j}=\frac 1r-\sum_{j=1}^m \frac 1{p_j}.$$
\end{theorem}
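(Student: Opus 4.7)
The plan is to upgrade the exponents one coordinate at a time, thereby reducing the statement to the following single-coordinate claim: \emph{if $T$ is multiple $(r,(p_1,\dots,p_m))$-summing, $q_1\geq p_1$, and $\frac{1}{r}>\frac{1}{p_1}-\frac{1}{q_1}$, then $T$ is also multiple $(r_1,(q_1,p_2,\dots,p_m))$-summing with the same constant, where $\frac{1}{r_1}=\frac{1}{r}-\frac{1}{p_1}+\frac{1}{q_1}$.} Applying this claim successively to the $m$ coordinates yields the desired multiple $(s,\mathbf{q})$-summability with the announced exponent. The positivity hypothesis $\frac{1}{r}-\sum \frac{1}{p_j}+\sum \frac{1}{q_j}>0$ is exactly what is needed to keep the running exponent $\frac{1}{r_k}$ positive at every intermediate stage, so that the single-coordinate claim can legitimately be invoked $m$ times in succession.

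For the single-coordinate claim I rely on the multiplier inequality $w_{p_1}(cy)\leq \|c\|_{\gamma_1}\,w_{q_1}(y)$ for scalar sequences $c$ and vector sequences $y$, where $\frac{1}{\gamma_1}=\frac{1}{p_1}-\frac{1}{q_1}$; this is immediate from the dual characterization $w_p(y)=\sup_{\|a\|_{p'}\leq 1}\|\sum_i a_i y_i\|$ combined with H\"older's inequality. Given $y^{(1)}$ with $w_{q_1}(y^{(1)})=1$ and $x^{(k)}$ with $w_{p_k}(x^{(k)})=1$ for $k\geq 2$, plugging the test sequences $(cy^{(1)},x^{(2)},\dots,x^{(m)})$ into the multiple $(r,\mathbf p)$-summing inequality yields, for every nonnegative $c$,
$$\sum_{\mathbf{i}} c_{i_1}^r\,\|T(y_{i_1}^{(1)},x_{i_2}^{(2)},\dots,x_{i_m}^{(m)})\|^r\;\leq\; \pi^r\|c\|_{\gamma_1}^r,$$
with $\pi:=\pi_{r,\mathbf p}^{\mathrm{mult}}(T)$.

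Set $b_{\mathbf{i}}:=\|T(y_{i_1}^{(1)},x_{i_2}^{(2)},\dots,x_{i_m}^{(m)})\|$ and $B_{i_1}:=\sum_{i_2,\dots,i_m} b_{\mathbf{i}}^r$. Rewriting the previous inequality with $v=c^r$ gives $\sum_{i_1} v_{i_1}B_{i_1}\leq \pi^r\|v\|_{\gamma_1/r}$ for every $v\geq 0$, and $\ell^p$-duality then produces $\|(B_{i_1})\|_{(\gamma_1/r)'}\leq \pi^r$. The hypothesis $\frac{1}{r}>\frac{1}{\gamma_1}$ forces $\tau:=(\gamma_1/r)'\geq 1$, so the elementary super-additivity $(\sum_j a_j)^\tau\geq \sum_j a_j^\tau$ valid for $\tau\geq 1$ and nonnegative $a_j$ converts this mixed bound into the full $\ell^{r\tau}$-bound
$$\sum_{\mathbf{i}}b_{\mathbf{i}}^{r\tau}\;\leq\; \sum_{i_1}B_{i_1}^\tau\;\leq\; \pi^{r\tau}.$$
A direct computation shows $r\tau=r_1$, which is exactly the desired multiple $(r_1,(q_1,p_2,\dots,p_m))$-summing inequality.

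The step I expect to be the main obstacle, and which also pinpoints where the hypotheses are actually used, is the need for $\tau\geq 1$ at every stage of the iteration: this is required both for the $\ell^p$-duality step and for the super-additivity inequality. It is precisely the positivity hypothesis of the theorem that guarantees $\frac{1}{r_k}>0$ at each stage, hence $\tau\geq 1$ throughout; without it the super-additivity would reverse direction and the argument would collapse.
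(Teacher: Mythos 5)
Your proof is correct, but it takes a genuinely different route from the paper's. The paper perturbs all $m$ coordinates at once: it multiplies each weak-$\ell_{q_k}$ sequence by a scalar sequence $u(k)$ in the unit ball of $\ell_{\theta_k}$ with $\frac 1{\theta_k}=\frac 1{p_k}-\frac 1{q_k}$, obtains $\sum_{\mathbf{i}}|u_{\mathbf{i}}|^r\|T(x_{\mathbf{i}})\|^r\leq \pi_{r,\mathbf p}^{\mathrm{mult}}(T)^r$, and then applies Proposition \ref{prop:praciano} (the non-negative Praciano--Pereira inequality, whose proof rests on Schep's positive factorization theorem) to the non-negative $m$-linear form $v\mapsto \sum_{\mathbf{i}}v_{\mathbf{i}}\|T(x_{\mathbf{i}})\|^r$ on $\ell_{\theta_1/r}\times\cdots\times\ell_{\theta_m/r}$; the hypothesis $\frac 1r-\sum_j\frac 1{p_j}+\sum_j\frac 1{q_j}>0$ is exactly the positivity condition on the exponent $\rho$ there. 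You instead upgrade one coordinate at a time, and each step uses only the multiplier bound $w_{p_1}(cy)\leq\|c\|_{\gamma_1}w_{q_1}(y)$, scalar $\ell^p$-duality, and the embedding coming from super-additivity of $t\mapsto t^\tau$; your computation $r\tau=r_1$ is right, the constant $\pi_{r,\mathbf p}^{\mathrm{mult}}(T)$ is preserved at every step, and your observation that $\frac1{r_k}\geq\frac 1s>0$ at each intermediate stage (because $q_j\geq p_j$ makes every correction non-negative) correctly justifies the $m$-fold iteration. What each approach buys: yours is more elementary and self-contained --- it is the classical linear inclusion argument iterated coordinatewise and avoids Schep's factorization entirely --- whereas the paper's one-shot use of Proposition \ref{prop:praciano} is shorter in context, since that proposition is needed anyway for the main theorems. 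Only cosmetic points remain on your side: treat (or skip as vacuous) coordinates with $q_k=p_k$, where $\gamma_k=\infty$, and note that the role of strict positivity is to make each $\gamma_k/r_{k-1}$ strictly larger than $1$ so that the conjugate exponent $\tau$ is finite (its being at least $1$ is automatic).
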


\subsection{Harmonic analysis}
A second application occurs in harmonic analysis. Let $G$ be a compact abelian group with dual group $\Gamma$. A subset $\Lambda$ of $\Gamma$ is called \emph{$p$-Sidon} ($1\leq p<2$) if there is a constant $\kappa>0$ such that each $f\in\mathcal C(G)$ with $\hat f$ supported on $\Lambda$ satisfies $\|\hat f\|_{\ell_p}\leq \kappa\|f\|_\infty.$ It is a classical result of Edwards and Ross \cite{ER74} (resp. Johnson and Woodward \cite{JW74}) that the direct product of two $1$-Sidon sets (resp. $m$ $1$-Sidon sets) is $4/3$-Sidon (resp. $2m/(m+1)$-Sidon). We generalize this to the product of $p$-Sidon sets. We need an extra assumption. A subset $\Lambda$ of $\Gamma$ is called a $\Lambda(p)$-set, $p\geq 1$, if for one $q\in [1,p)$ (equivalently, for all $q\in [1,p)$), there exists $\kappa>0$ such that, for all $f\in\mathcal C(G)$ with $\hat f$ supported on $\Lambda$, 
$$\|f\|_{L^p(G)}\leq \kappa \|f\|_{L^q(G)}.$$

\begin{theorem}\label{thm:sidon}
Let $G_1,\dots,G_m$, $m\geq 2$, be compact abelian groups with respective dual groups $\Gamma_1,\dots,\Gamma_m$. For $1\leq j\leq m$, let $\Lambda_j\subset\Gamma_j$ be a $p_j$-Sidon and $\Lambda(2)$-set. Then $\Lambda_1\times\cdots\times\Lambda_m$ is a $p$-Sidon set in $\Gamma_1\times\cdots\times \Gamma_m$ for
$$\frac1p=\frac 12+\frac{1}{2R}\textrm{ and }R=\sum_{k=1}^m \frac{p_k}{2-p_k}.$$
Moreover, this value of $p$ is optimal.
\end{theorem}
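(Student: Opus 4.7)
The plan is to reduce the $p$-Sidonness claim for $\Lambda_1\times\cdots\times\Lambda_m$ to a multiple summing estimate for an auxiliary multilinear form, and then to extract it from Theorem~\ref{thm:intro}. After the standard reduction to finite $\Lambda_j$'s, fix a trigonometric polynomial $f\in\mathcal C(G_1\times\cdots\times G_m)$ with $\widehat f$ supported on $\Lambda_1\times\cdots\times\Lambda_m$. The central object is the $m$-linear form
\[
T(a^{(1)},\ldots,a^{(m)})=\sum_{\gamma_j\in\Lambda_j}\widehat f(\gamma_1,\ldots,\gamma_m)\,a^{(1)}_{\gamma_1}\cdots a^{(m)}_{\gamma_m}
\]
defined on $\ell^{p_1^*}(\Lambda_1)\times\cdots\times\ell^{p_m^*}(\Lambda_m)$; note that $T(e_{\gamma_1},\ldots,e_{\gamma_m})=\widehat f(\gamma_1,\ldots,\gamma_m)$ and that the canonical basis of $\ell^{p_j^*}(\Lambda_j)$ has weak-$t$-norm at most $1$ precisely when $t\geq p_j$. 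Hence the desired $p$-Sidon estimate $(\sum_\gamma|\widehat f(\gamma)|^p)^{1/p}\leq C\|f\|_\infty$ will follow from the multiple $(p,(p_1,\ldots,p_m))$-summability of $T$.

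The analytic input consists of two boundedness properties of $T$. By Hahn--Banach applied to the functional $a\mapsto\sum a_\gamma\widehat g(\gamma)$ on $\mathcal C_{\Lambda_j}(G_j)$, the $p_j$-Sidon assumption yields that every $a^{(j)}\in\ell^{p_j^*}(\Lambda_j)$ is the restriction to $\Lambda_j$ of $\widehat\mu_j$ for some measure $\mu_j\in M(G_j)$ with $\|\mu_j\|\leq\kappa_j\|a^{(j)}\|_{p_j^*}$, whence $T(a)=\int f\,d(\mu_1\otimes\cdots\otimes\mu_m)$ and $T$ is bounded on $\ell^{p_1^*}(\Lambda_1)\times\cdots\times\ell^{p_m^*}(\Lambda_m)$ with $\|T\|\lesssim\|f\|_\infty$. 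Separately, any $a^{(j)}\in\ell^2(\Lambda_j)$ is the Fourier transform on $\Lambda_j$ of some $g_j\in L^2_{\Lambda_j}(G_j)$ with $\|g_j\|_{L^2}=\|a^{(j)}\|_2$, giving $|T(a)|=|\int f\,\bar g_1\otimes\cdots\otimes\bar g_m|\leq\|f\|_\infty\prod\|g_j\|_{L^1}\leq\|f\|_\infty\prod\|a^{(j)}\|_2$, so $T$ is also bounded on $\ell^2(\Lambda_1)\times\cdots\times\ell^2(\Lambda_m)$. From the first of these I read off, via the elementary identity $w_{p_j}((x_n))=\sup_{\|\psi\|_{p_j^*}\leq 1}\|\sum\psi_nx_n\|_{p_j^*}$, that each one-variable restriction of $T$ is $(p_j,p_j)$-summing on $\ell^{p_j^*}(\Lambda_j)$. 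The $\Lambda(2)$-hypothesis is then used to upgrade this separate information into the sharper form needed so that, when fed into Theorem~\ref{thm:intro} (with $Y=\mathbb C$ of cotype $q=2$) with appropriately chosen weak-norm and separate-summing parameters, the resulting formula for the multiple summing exponent collapses after routine algebra onto $\frac1p=\frac12+\frac{1}{2R}$ with $R=\sum_k p_k/(2-p_k)$. Evaluating on the canonical basis of the $\ell^{p_j^*}$'s then produces the claimed $p$-Sidon inequality.

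Optimality is established separately via a Rider-type probabilistic construction: for each $j$, pick a random thin $p_j$-Sidon $\Lambda(2)$-set $\Lambda_j$ whose $p_j$-Sidon constant is essentially saturated, and test the product $p$-Sidon inequality against a trigonometric polynomial with independent unimodular random coefficients; Khintchine--Kahane estimates then produce the matching lower bound $p\geq 2R/(R+1)$. The main obstacle is the upper bound: Theorem~\ref{thm:intro} has several free parameters, and the precise exponent $\frac12+\frac{1}{2R}$ only emerges when the separate $(p_j,p_j)$-summing is combined both with the $\ell^2$-boundedness of $T$ (coming from $\Lambda(2)$) and with the cotype-$2$ term $\frac{m-1}{2m}$, so that the $p_j$-dependent correction assembles exactly into the combination $R=\sum_k p_k/(2-p_k)$.
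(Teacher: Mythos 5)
Your upper-bound strategy has a genuine gap at exactly the point where the theorem's strength comes from. You correctly get that $T$ is bounded on $\ell^{p_1^*}(\Lambda_1)\times\cdots\times\ell^{p_m^*}(\Lambda_m)$ with norm $\lesssim\|f\|_\infty$, hence each one-variable restriction is $(p_j,p_j)$-summing; but this information alone, fed into Corollary \ref{cor:praciano} / Theorem \ref{thm:intro} with $q=2$, yields only $\frac1s=\frac{m-1}{2m}+\frac1m\bigl(1-\sum_k\frac1{p_k^*}\bigr)$, which is strictly weaker than the claimed $\frac1p=\frac12+\frac1{2R}$ as soon as some $p_k>1$ (for $m=2$, $p_1=p_2=p\in(1,4/3)$ it gives $\frac1s=\frac1p-\frac14$ versus the target $\frac14+\frac1{2p}$). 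The sentence ``the $\Lambda(2)$-hypothesis is then used to upgrade this separate information'' is where the whole proof should be, and it cannot be carried out within your framework: a nonzero scalar-valued linear functional is never $(r,p)$-summing for $r<p$ (test a single vector repeated $N$ times), so the separate summability of the scalar form $T$ can never be better than $(p_k,p_k)$, and Theorem \ref{thm:intro} (which moreover requires the same parameters in every coordinate, so at best you would need Corollary \ref{cor:main} or Theorem \ref{thm:main1}) has no slot through which the $\Lambda(2)$ information can enter. The paper's proof does not go through the multiple-summing theorems at all: for each $k$ it uses that the product of the remaining $\Lambda_j$'s is a $\Lambda(2)$-set to get, for each fixed $i$, $\bigl(\sum_{\bj}|a_{i,\bj}|^2\bigr)^{p_k/2}\le\kappa\int_{\widehat{G_k}}\bigl|\sum_\bj a_{i,\bj}\gamma_\bj(g')\bigr|^{p_k}dg'$, then sums in $i$ and uses $p_k$-Sidonicity of $\Lambda_k$ to obtain the mixed-norm bound $\bigl(\sum_i(\sum_\bj|a_{i,\bj}|^2)^{p_k/2}\bigr)^{1/p_k}\le\kappa\|f\|_\infty$, and finally applies the Blei/Popa--Sinnamon mixed-norm inequality (Lemma \ref{lem:popa}) with $q=2$, $r_k=p_k$, which produces exactly $Q=2R/(1+R)$. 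The $\Lambda(2)$ hypothesis thus enters through a joint (mixed-norm) estimate on the coefficients, not through any separate summability of $T$; your proposal never produces such an estimate.

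The optimality part is also not a proof. The paper establishes it by a concrete construction: Blei's fractional Cartesian products of disjoint pieces of the Rademacher system (which are exactly $p_\delta$-Sidon), Riesz products to test the polynomials $f_N$, and the necessary condition of Lemma \ref{lem:necpsidon} comparing $\|f_N\|_{L^s}$ with $\sqrt s\,\|\widehat{f_N}\|_{2p/(3p-2)}$. Your ``random thin $p_j$-Sidon $\Lambda(2)$-set with saturated Sidon constant plus Khintchine--Kahane'' gives no construction, no verification of the $p_j$-Sidon and $\Lambda(2)$ properties, and no actual lower-bound computation, so the matching bound $p\ge 2R/(R+1)$ is unsubstantiated.
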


It is well known that any $1$-Sidon set is automatically a $\Lambda(p)$-set for all $p\geq 1$. It is not known whether all $p$-Sidon sets are $\Lambda(2)$ or not. We also get an analogous result for another natural generalization of $1$-Sidon sets, the so-called \emph{$p$-Rider sets}, without any extra assumption.

\smallskip

\textsc{Organization of the paper.} Section 2 is devoted to the introduction of some notations and definitions. We then give the statements of our main theorems (Theorems \ref{thm:main1}, \ref{thm:main2} and \ref{thm:main3}). These statements may look technical but we derive immediately from them several striking corollaries. We emphasize particularly Corollary \ref{cor:main} whose proof needs the three main results.

In Section 3, we prove several auxiliary results. They seem interesting for themselves; for instance, they are at the heart of the proof of Theorems \ref{thm:inclusion} and \ref{thm:sidon}. We apply these auxiliary results in the 
three next sections to the problems we have in mind: coordinatewise summability in Section 4, inclusion theorems in Section 5, and harmonic analysis in Section 6. Finally, in Section 7, we discuss
the optimality of our results.

\section{Preliminaries: notations and statements of the results}
\subsection{General statements}
We shall use the terminology and notations introduced in \cite{DPS10} and \cite{PopSinna13}. For Banach spaces $X_1,\dots,X_m$, $m\geq 2$, and a proper
subset $C$ of $\{1,\dots,m\}$, we write $X^C=\prod_{j\in C}X_j$ and identify in the obvious way $X_1\times\cdots\times X_m$ with $X^C\times X^{\overline{C}}$
where $\overline C$ denotes the complement of $C$ in $\{1,\dots,m\}$. With this identification, if $y\in X^C$ and $z\in X^{\overline{C}}$, then $x=(y,z)\in X_1\times\cdots\times X_m$. 
For $x\in X_1\times\cdots\times X_m$, we shall also denote by $x(C)$ its projection on $X^C$, so that we may write $x=(x(C),x(\overline{C}))$. We take the norm
on finite products of Banach spaces to be the maximum of the component norms; hence the identification is isometric. We shall abbreviate $x(\{k\})$ by $x(k)$, namely the $k$-th coordinate of $x\in X_1\times\cdots\times X_m$.

If $T:X_1\times\cdots\times X_m\to Y$ is $m$-linear and $z\in X^{\overline C}$, the map $T^{C}(z)$ defined on $X^C$ by $T^{C}(z)(x)=T(x,z)$ is clearly $|C|$-linear. For $r,p\geq 1$, we say that
$T$ is {\it coordinatewise multiple $(r,p)$-summing in the coordinates of $C$} provided $T^{C}(z)$ is multiple $(r,p)$-summing for all $z\in\overline{C}$. In that case, we shall denote
$$\|T^C\|_{CW(r,p)}=\sup\left\{\pi_{r,p}^{\textrm{mult}}(T^C(z));\ \|z\|_{X^{\overline C}}\leq 1\right\}.$$

Our first result deals with $(r,\mathbf p)$-multiple summing maps where $r$ does not exceed the cotype of the target space.

\begin{theorem}\label{thm:main1}
Let $m\geq 2$, let $\{1,\dots,m\}$ be the disjoint union of $n\geq 2$ non-empty subsets $C_1,\dots,C_n$, let $Y$ be a Banach space with cotype $q$ and let 
$r_1,\dots,r_n\in [1,q)$, $p_1,\dots, p_n\in [1,+\infty)$. Define
\begin{eqnarray*}
 \frac{1}{\gamma_k}&=&\frac1{r_k}-\sum_{j\neq k}\frac{|C_j|}{p_j^*}\times\frac{1-\frac q{r_k}-\frac{q|C_k|}{p_k^*}}{1-\frac q{r_j}-\frac{q |C_j|}{p_j^*}},\ k=1,\dots,n\\
 \frac1{\gamma_{k,l}}&=&\frac 1{r_k}-\sum_{j\neq k,l}\frac{|C_j|}{p_j^*}\times\frac{1-\frac q{r_k}-\frac{q|C_k|}{p_k^*}}{1-\frac q{r_j}-\frac{q |C_j|}{p_j^*}},\ k\neq l\in\{1,\dots,n\}\\
 R&=&\sum_{k=1}^n \frac{\gamma_k}{q-\gamma_k}\\
 s&=&\frac{qR}{1+R}\\
 q_j&=&p_k\ \textrm{provided }j\in C_k,\ j=1,\dots,m\\
 \mathbf q&=&(q_1,\dots,q_m).
\end{eqnarray*}
Let us also assume that, for all $k\neq l\in\{1,\dots,n\}$, $\gamma_k>0$, $0<\gamma_{k,l}\leq q$ and $\frac{|C_l|\gamma_{k,l}}{p_l^*}\leq 1$. Then all $m$-linear maps $T:X_1\times\cdots\times X_m\to Y$ which are
$(r_k,p_k)$-summing in the coordinates of $C_k$ for each $k=1,\dots,n$ are multiple $(s,\mathbf q)$-summing.
\end{theorem}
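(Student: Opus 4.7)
The plan is to estimate $\bigl(\sum_{\bi\in\NN^m}\|T(x_\bi)\|^s\bigr)^{1/s}$ for sequences $x(j)\in X_j^\NN$ with $w_{q_j}(x(j))\le 1$, writing $\bi=(\bi(C_1),\dots,\bi(C_n))$ according to the partition. The key algebraic identity hidden in the statement is $1/s = 1/q + 1/(qR)$, which suggests splitting the task into a cotype $q$ contribution (the $1/q$) and an interpolation contribution (the $1/(qR)$) handled by a H\"older-type geometric mean of $n$ block estimates weighted by $\lambda_k := \gamma_k/(q-\gamma_k)$, so that $\sum_k \lambda_k = R$ and the weights $\lambda_k/R$ sum to $1$.

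For each $k\in\{1,\dots,n\}$ I would derive a block estimate of the form
$$\Bigl(\sum_{\bi(C_k)}\Bigl(\sum_{\bi(\overline{C_k})}\|T(x_\bi)\|^q\Bigr)^{\gamma_k/q}\Bigr)^{1/\gamma_k} \le C_k \prod_{j=1}^m w_{q_j}(x(j))^{\alpha_{j,k}}$$
for suitable exponents $\alpha_{j,k}$ summing to the right multiplicities. Such an estimate would follow by first invoking cotype $q$ of $Y$ in the $\bi(\overline{C_k})$ coordinates, which replaces the inner $\ell^q$-sum by a Rademacher expectation; multilinearity absorbs the signs $\veps$ into the sequences $x(j)$ for $j\notin C_k$. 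Then, for each block $C_j$ with $j\ne k$, one uses the hypothesis of coordinatewise multiple $(r_j,p_j)$-summability of $T$ in $C_j$, combined with the inclusion theorem (Theorem~\ref{thm:inclusion}) to shift the effective outer exponent from $r_j$ to $\gamma_{k,l}$ as successive blocks are dealt with, while Khintchine--Kahane absorbs the residual Rademacher averages. Finally, coordinatewise $(r_k,p_k)$-summability is applied in the privileged block $C_k$ to produce the outer $\ell^{\gamma_k}$ summation. The algebraic formulas for $\gamma_k$ and $\gamma_{k,l}$ are exactly the interpolation parameters that come out: the term $1/r_k$ corresponds to the final $C_k$-summability step, and $\sum_{j\neq k}(|C_j|/p_j^*)\times\cdots$ is the total H\"older correction from the inclusion-theorem applications in the other blocks, weighted by the interpolation factor $(1-q/r_k-q|C_k|/p_k^*)/(1-q/r_j-q|C_j|/p_j^*)$. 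The hypotheses $\gamma_k>0$ and $0<\gamma_{k,l}\le q$ ensure these interpolated exponents are positive and below the cotype exponent (so that a further cotype application is legitimate), while $|C_l|\gamma_{k,l}/p_l^*\le 1$ is exactly the admissibility condition $1/\gamma_{k,l}-|C_l|/p_l^*\ge 0$ required to invoke Theorem~\ref{thm:inclusion} blockwise in $C_l$.

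The main obstacle is the algebraic bookkeeping: unpacking the nested cotype, inclusion-theorem, and Khintchine--Kahane steps while tracking precisely how the exponents $\gamma_k$ and $\gamma_{k,l}$ emerge from each interpolation, and then verifying that the $n$ block estimates, geometrically averaged with weights $\lambda_k/R$, reassemble into the outer exponent $s=qR/(1+R)$ with each weak norm $w_{q_j}(x(j))$ appearing with the correct multiplicity $1$ when $j\in C_k$. I would rely on the auxiliary results of Section~3 to package the single-block estimate in a clean abstract form, so that the proof reduces to combining $n$ such packaged estimates via H\"older at the weights dictated by the split $1/s = 1/q + 1/(qR)$.
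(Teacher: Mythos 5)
You have the right skeleton: for each $k$ a mixed-norm block estimate
$\bigl(\sum_{\bi\in\NN^{C_k}}(\sum_{\bj\in\NN^{\overline{C_k}}}\|T(x_\bi,x_\bj)\|^q)^{\gamma_k/q}\bigr)^{1/\gamma_k}\leq\kappa$, then a combination of the $n$ block estimates with weights $\lambda_k/R$, $\lambda_k=\gamma_k/(q-\gamma_k)$, using $1/s=1/q+1/(qR)$. This matches the paper's structure (Proposition \ref{prop:main} followed by Lemma \ref{lem:popa}). One caveat already at the last step: since the $n$ factors are mixed norms over \emph{different} splittings of the index set, the reassembly is not a plain H\"older/geometric-mean argument; it is the Blei--Popa--Sinnamon mixed-norm inequality (Lemma \ref{lem:popa}), which is what actually produces $s=qR/(1+R)$.

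The genuine gap is in how you propose to obtain the block estimates. First, after the cotype-plus-randomization step the vectors $\sum_i\veps_{j,i}(\omega)x_i(j)$, $j\notin C_k$, are only bounded by $w_1(x(j))$, which is not controlled when the data are merely weak-$\ell_{p_l}$ bounded; the paper first multiplies by weights $v(u)\in B_{\ell_{p_l^*}}$ so that $w_1(vx)\leq 1$, and the price is that one only gets \emph{weighted} inner sums $\sum_\bj v_\bj^q\|T(x_\bi,x_\bj)\|^q$ at the exponent $r_k$. Second, and more seriously, the passage from these weighted estimates at exponent $r_k$ to the unweighted mixed-norm estimate at exponent $\gamma_k$ is not achieved by Theorem \ref{thm:inclusion} plus Khintchine--Kahane: the inclusion theorem only trades both indices of a map that is already multiple summing, and applied blockwise to $T^{C_j}(z)$ it merely weakens the coordinatewise hypothesis; it cannot create the mixed-norm gain that defines $\gamma_{k,l}$ and $\gamma_k$. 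The paper's engine here is the abstract Hardy--Littlewood ``to $c_0$ and back'' duality iteration, Lemma \ref{lem:hardylittlewood} and its $n$-block version Proposition \ref{prop:main}, whose key ingredient is the non-negative Praciano--Pereira inequality (Proposition \ref{prop:praciano}, via Schep's factorization of positive multilinear maps). The hypotheses $\gamma_{k,l}\in(0,q]$ and $|C_l|\gamma_{k,l}/p_l^*\leq 1$ are precisely the admissibility conditions of that lemma in the induction (playing the roles of $\alpha,\beta\leq q$ and $m_1\alpha/p_1\leq 1$), not conditions for invoking Theorem \ref{thm:inclusion}. Without an argument of this duality type, your sketch does not produce the exponents $\gamma_k$, so the proof as proposed does not go through.
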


Our second result deals with $(r,\mathbf p)$-multiple summing maps with $r$ exceeding the cotype of the target space, but when we start from $(r_k,p_k)$-coordinatewise summability with $r_k\leq q$.

\begin{theorem}\label{thm:main2}
 Let $m\geq 2$, let $\{1,\dots,m\}$ be the disjoint union of $n\geq 2$ non-empty subsets $C_1,\dots,C_n$, let $Y$ be a Banach space with cotype $q$ and let 
$r_1,\dots,r_n\in [1,q)$, $p_1,\dots, p_n\in [1,+\infty)$. Define
\begin{eqnarray*}
 \frac{1}{\gamma_{k,J}}&=&\frac1{r_k}-\sum_{j\notin J\cup \{k\}}\frac{|C_j|}{p_j^*}\times\frac{1-\frac q{r_k}-\frac{q|C_k|}{p_k^*}}{1-\frac q{r_j}-\frac{q |C_j|}{p_j^*}},\ k=1,\dots,n,\ 
 J\subset\{1,\dots,n\}\backslash\{k\}\\
 q_j&=&p_k\ \textrm{provided }j\in C_k,\ j=1,\dots,m\\
 \mathbf q&=&(q_1,\dots,q_m). 
\end{eqnarray*}
Assume that there exists $J\subset\{1,\dots,n\}$ such that
\begin{enumerate}
 \item there exists $k_0\notin J$ with $\gamma_{k_0,J}\geq q$;
 \item For any $k,l\in\{1,\dots,n\}\backslash J$, $k\neq l$, $\gamma_{k,J\cup\{l\}}\in(0, q]$;
 \item For any $k,l\in\{1,\dots,n\}\backslash J$, $k\neq l$, $\frac{|C_l|\gamma_{k,J\cup\{l\}}}{p_l^*}\leq 1$.
\end{enumerate}
We finally set
$$\frac1s=\frac{1}{\gamma_{k_0,J}}-\sum_{j\in J}\frac{|C_j|}{p_j^*}$$
and assume that $s>0$.
Then all $m$-linear maps $T:X_1\times\cdots\times X_m\to Y$ which are
$(r_k,p_k)$-summing in the coordinates of $C_k$ for each $k=1,\dots,n$ are multiple $(s,\mathbf q)$-summing.
\end{theorem}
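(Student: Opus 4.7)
The plan is to deduce Theorem~\ref{thm:main2} from Theorem~\ref{thm:main1} together with the inclusion theorem (Theorem~\ref{thm:inclusion}). The crucial observation is that the quantities $\gamma_{k,J}$ and $\gamma_{k,J\cup\{l\}}$ are exactly those one obtains from the formulas of Theorem~\ref{thm:main1} after substituting $p_j=1$ (equivalently $|C_j|/p_j^*=0$) for every $j\in J$. This substitution is justified because $(r_j,p_j)$-coordinatewise summing trivially implies $(r_j,1)$-coordinatewise summing, via the elementary monotonicity $w_{p_j}\le w_1$; hence we are entitled to treat the groups indexed by $J$ as if they carried weak exponent $1$.

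Given this, the first step is to establish the intermediate assertion that $T$ is multiple $(\gamma_{k_0,J},\mathbf q^0)$-summing, where the tuple $\mathbf q^0=(q^0_1,\ldots,q^0_m)$ is defined by $q^0_j=p_k$ for $j\in C_k$ with $k\notin J$, and $q^0_j=1$ for $j\in C_k$ with $k\in J$. This should follow by running the argument of Theorem~\ref{thm:main1} on the reduced partition $\{C_k:k\in\{1,\ldots,n\}\setminus J\}$ of the coordinates $\bigcup_{k\notin J}C_k$ with the substitution described above: hypotheses~(2) and~(3) of Theorem~\ref{thm:main2} are precisely the conditions $\gamma_{k,l}\in(0,q]$ and $|C_l|\gamma_{k,l}/p_l^*\le 1$ demanded by Theorem~\ref{thm:main1} for this reduced problem, whose parameters are, respectively, $\gamma_{k,J\cup\{l\}}$ and the analogous $\gamma$'s in the notation of Theorem~\ref{thm:main2}.

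Next, the inclusion theorem is applied to trade the weak-$\ell^1$ exponents on the coordinates of $\bigcup_{k\in J}C_k$ for the desired weak-$\ell^{p_k}$ exponents. Since $q_j\ge q^0_j$ coordinatewise, Theorem~\ref{thm:inclusion} applied to the intermediate multiple $(\gamma_{k_0,J},\mathbf q^0)$-summing statement yields multiple $(s,\mathbf q)$-summing with
$$\frac1s-\sum_{j=1}^m\frac1{q_j}=\frac1{\gamma_{k_0,J}}-\sum_{j=1}^m\frac1{q^0_j},$$
which simplifies at once to $1/s=1/\gamma_{k_0,J}-\sum_{k\in J}|C_k|/p_k^*$, exactly the value of $1/s$ in the statement. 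The standing assumption $s>0$ is precisely the positivity condition needed to apply Theorem~\ref{thm:inclusion}.

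The main obstacle is the intermediate step, which requires adapting the proof of Theorem~\ref{thm:main1} to the situation where one of the $\gamma$'s saturates the cotype. In Theorem~\ref{thm:main1} all $\gamma_k$ lie strictly below $q$, so that $R=\sum\gamma_k/(q-\gamma_k)$ is finite and the cotype inequality can be used at every index to produce the output exponent $qR/(1+R)$. Here hypothesis~(1) allows $\gamma_{k_0,J}\ge q$: at $k_0$ the cotype step must be omitted, and the exponent $\gamma_{k_0,J}$ read off directly from the coordinatewise summability in $C_{k_0}$ combined with H\"older. Distributing the remaining cotype budget asymmetrically among the indices $k\in\{1,\ldots,n\}\setminus(J\cup\{k_0\})$, while still ensuring that the output exponent is exactly $\gamma_{k_0,J}$ and not some strictly smaller quantity, is where the bulk of the technical work lies.
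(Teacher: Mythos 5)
Your overall architecture (an intermediate multiple summability statement with weak exponent $1$ on the blocks indexed by $J$, followed by Theorem \ref{thm:inclusion}, whose exponent arithmetic you carry out correctly) is plausible, but the proof has a genuine gap exactly where you locate "the bulk of the technical work": the intermediate assertion that $T$ is multiple $(\gamma_{k_0,J},\mathbf q^0)$-summing is never proved, and the route you sketch for it does not go through. Rerunning Theorem \ref{thm:main1} with the substitution $p_j=1$ for $j\in J$ is not legitimate here, because the machinery behind Theorem \ref{thm:main1} (Proposition \ref{prop:main} followed by Lemma \ref{lem:popa}) requires \emph{all} the exponents $\gamma_k$ to lie strictly below the cotype $q$; with $\gamma_{k_0,J}\geq q$ the quantity $R=\sum_k\gamma_k/(q-\gamma_k)$ and the output exponent $qR/(1+R)$ lose their meaning, and Lemma \ref{lem:popa} is simply inapplicable. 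More importantly, even after "omitting the cotype step at $k_0$", what the Hardy--Littlewood machinery naturally produces is a statement about the restricted maps $T^C(z)$ for each \emph{fixed} $z$ in the unit ball of $X^D$ (with $D=\bigcup_{j\in J}C_j$, $C=\bar D$), namely a mixed-norm bound with outer exponent $\gamma_{k_0,J}$ and inner exponent $q$, uniform in $z$. Passing from this uniform, frozen-$z$ information to genuine multiple summability over the indices of the $D$-coordinates (whether with weak exponent $1$, as in your $\mathbf q^0$, or with weak exponent $q_j$) is a nontrivial step requiring a random-signs/cotype argument on $\ell_r(Y)$; it is precisely the content of Lemma \ref{lem:displike}, which your proposal never invokes and which cannot be replaced by "H\"older plus distributing the cotype budget asymmetrically".

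For comparison, the paper's proof does the following: fix $z\in B_{X^D}$, run the proof of Theorem \ref{thm:main1} for the map $T^C(z)$ on the partition $\{C_k\}_{k\notin J}$, stopping before Lemma \ref{lem:popa}; this gives the mixed-norm estimate with exponents $(\gamma_{k_0,J},q)$, and since $\gamma_{k_0,J}\geq q$ the inner $\ell^q$-norm dominates, so $T^C(z)$ is multiple $(\gamma_{k_0,J},\mathbf q|_C)$-summing uniformly in $z$. Then Lemma \ref{lem:displike}, applied with $r=\gamma_{k_0,J}$, converts this coordinatewise information directly into multiple $(s,\mathbf q)$-summability of $T$ with $\frac1s=\frac1{\gamma_{k_0,J}}-\sum_{j\in J}\frac{|C_j|}{p_j^*}$; the trade-off you wanted to obtain from Theorem \ref{thm:inclusion} is thus performed inside Lemma \ref{lem:displike}. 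If you want to salvage your plan, the missing intermediate step should be proved by an argument of exactly this type (it is essentially Lemma \ref{lem:displike} with weak exponent $1$ on $\bar C$, which in addition forces the extra condition $s\geq q_k$ at the less favourable value $s=\gamma_{k_0,J}$); as written, the proposal assumes the hardest part of the theorem.
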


Our third result solves the case when one $r_k$ is greater than $q$.
\begin{theorem}\label{thm:main3}
 Let $m\geq 2$, let $\{1,\dots,m\}$ be the disjoint union of $n\geq 2$ non-empty subsets $C_1,\dots,C_n$, let $Y$ be a Banach space with cotype $q$ and let 
$r_1,\dots,r_n\in [1,+\infty)$, $p_1,\dots, p_n\in [1,+\infty)$. Assume that there exists $k\in\{1,\dots,n\}$ such that $r_k\geq q$. We set
$$\frac1 s=\frac{1}{r_k}-\sum_{j\neq k}\frac{|C_j|}{p_j^*}$$ 
and assume that $s>0$. Then all $m$-linear maps $T:X_1\times\cdots\times X_m\to Y$ which are
$(r_k,p_k)$-summing in the coordinates of $C_k$ for each $k=1,\dots,n$ are multiple $(s,\mathbf q)$-summing
where $\mathbf q$ is defined by $q_j=p_k$ for $j\in C_k$, $j=1,\dots,m$.
\end{theorem}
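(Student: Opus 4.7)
The plan is to reduce Theorem~\ref{thm:main3} to two ingredients: an iterated cotype-based lemma that adds coordinates one at a time with weak exponent $1$, together with the inclusion theorem (Theorem~\ref{thm:inclusion}) for the final adjustment of the weak exponents. Without loss of generality I assume the index $k$ of the hypothesis equals $1$, so $r_1\geq q$; in particular $Y$ also has cotype $r_1$.

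The key auxiliary statement is the following one-coordinate extension lemma: if an $m$-linear map $S:X_1\times\cdots\times X_m\to Y$ is coordinatewise multiple $(\tau,\mathbf p)$-summing in the coordinates of some subset $C$ with $\tau\geq q$, then for any $j_0\notin C$ the map $S$ is coordinatewise multiple $(\tau,(\mathbf p,1))$-summing in $C\cup\{j_0\}$, with the new constant bounded by $K_\tau(Y)$ times the old one. To prove this, fix values $z'$ for the coordinates outside $C\cup\{j_0\}$ and apply cotype $\tau$ of $Y$ in the index $i_{j_0}$:
\[
\Bigl(\sum_{i_{j_0}}\|S(x_{\mathbf i_C},x_{i_{j_0}}(j_0),z')\|^\tau\Bigr)^{1/\tau}\leq K_\tau(Y)\Bigl(\mathbb{E}\bigl\|S(x_{\mathbf i_C},\xi,z')\bigr\|^\tau\Bigr)^{1/\tau},
\]
where $\xi=\sum_{i_{j_0}}\varepsilon_{i_{j_0}}x_{i_{j_0}}(j_0)$ appears after pulling the Rademachers inside $S$ via multilinearity. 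Raising to the power $\tau$, summing over $\mathbf i_C$, swapping with the expectation, applying the multiple $(\tau,\mathbf p)$-summing property in $C$ to each realisation of $\xi$, and finally using the deterministic inequality $\|\xi\|\leq w_1(x(j_0))$ (triangle inequality after testing against a norming functional) concludes the proof of the lemma.

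Starting from the hypothesis (the case $C=C_1$, $\tau=r_1$, $\mathbf p$ constantly equal to $p_1$), I iterate this lemma to add the coordinates of $\overline{C_1}$ one by one; since $\tau$ never changes, the cotype condition $\tau=r_1\geq q$ is preserved throughout. After all additions, $T$ is multiple $(r_1,\mathbf q_0)$-summing with $(\mathbf q_0)_j=p_1$ for $j\in C_1$ and $(\mathbf q_0)_j=1$ for $j\in\overline{C_1}$. I then apply Theorem~\ref{thm:inclusion} to raise the weak exponent on each $j\in C_k$ (for $k\neq 1$) from $1$ up to $p_k$ while leaving the weak exponents on $C_1$ at $p_1$. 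A direct computation with the inclusion formula yields
\[
\frac{1}{s}=\frac{1}{r_1}-\sum_{k\neq 1}\frac{|C_k|}{p_k^*},
\]
which is exactly the exponent in the theorem; the positivity condition required by Theorem~\ref{thm:inclusion} reduces to $s>0$, which is part of the hypothesis.

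The main point to recognise is that one should first reduce to weak exponent $1$ on the coordinates of $\overline{C_1}$ (where the cotype argument works cleanly thanks to the trivial Rademacher bound $\|\sum\varepsilon_i x_i\|\leq w_1((x_i))$) and only then use the inclusion theorem to promote these weak exponents to the desired values $p_k$; attempting to handle a general weak exponent directly inside the cotype step is where naive approaches fail.
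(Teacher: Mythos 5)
Your argument is correct, but it follows a genuinely different route from the paper. The paper proves Theorem \ref{thm:main3} in one stroke by applying Lemma \ref{lem:displike} with $C=C_k$ and $r=r_k\geq q$: that lemma absorbs all the remaining coordinates at once, using that $\mathrm{id}:\ell_{r}(Y)\to\ell_{r}(Y)$ is $(r,1)$-summing (cotype of $\ell_r(Y)$), the \emph{linear} inclusion theorem, and Proposition \ref{prop:praciano} to dualize the weak norm and push the weights $v_{\bi}$ inside $T$ by multilinearity. You instead first iterate a one-coordinate extension lemma — cotype $q\leq r_1$ of $Y$ plus Kahane, together with the bound $\bigl\|\sum_i\veps_i x_i(j_0)\bigr\|\leq w_1(x(j_0))$ — to show that coordinatewise $(r_1,p_1)$-summability in $C_1$ upgrades to multiple $(r_1,\mathbf q_0)$-summability of $T$ with weak exponent $1$ on $\overline{C_1}$ (this is the same cotype/Kahane mechanism as in the proof of Theorem \ref{thm:main1}, here preserving the strong exponent because $r_1\geq q$, exactly as in the remark that every map into a cotype $q$ space is multiple $(q,1)$-summing), and you then invoke the \emph{multilinear} inclusion Theorem \ref{thm:inclusion} to raise the weak exponents from $1$ to $p_k$; your exponent bookkeeping $\frac1s=\frac1{r_1}-\sum_{k\neq1}\frac{|C_k|}{p_k^*}$ and the positivity check are correct, and there is no circularity since Theorem \ref{thm:inclusion} is proved independently from Proposition \ref{prop:praciano}. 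What your route buys is a more elementary, self-contained reduction that avoids Lemma \ref{lem:displike} entirely; what the paper's route buys is a single lemma that simultaneously serves Theorem \ref{thm:main2}, and somewhat better control of constants (your constant picks up a factor $(C_q(Y)K_{r_1,q})^{|\overline{C_1}|}$ from the iteration, plus the constant from Theorem \ref{thm:inclusion}). Two small points you leave implicit, as does the paper: the finiteness of $\|T^{C_1}\|_{CW(r_1,p_1)}$ as a uniform bound over the unit ball, and the fact that cotype $q$ implies cotype $r_1$ with the Kahane constant folded into your $K_{r_1}(Y)$.
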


\subsection{Corollaries} The statement of Theorems \ref{thm:main1}, \ref{thm:main2} and \ref{thm:main3} may look complicated; this is due to their generality. In particular cases, they look nicer; they cover and extend many known statements.
We begin by assuming that $p_k=1$ for all $k\in\{1,\dots,n\}$.
\begin{corollary}
 Let $m\geq 2$, let $\{1,\dots,m\}$ be the disjoint union of $n\geq 2$ non-empty open subsets $C_1,\dots,C_n$, let $Y$ be a Banach space with cotype $q$ and let 
$r_1,\dots,r_n\in [1,q)$. Set 
$$R=\sum_{k=1}^n \frac{r_k}{q-r_k},\ s=\frac{qR}{1+R}.$$
Then all $m$-linear maps $T:X_1\times\dots\times X_m\to Y$ which are $(r_k,1)$-summing in the coordinates of $C_k$ for each $k=1,\dots,n$ are multiple $(s,1)$-summing.
\end{corollary}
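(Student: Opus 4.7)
The plan is a direct specialization of Theorem \ref{thm:main1} to $p_1=\cdots=p_n=1$. The first step is to simplify the defining formulas. With $p_k=1$ one has $1/p_k^*=0$ for every $k$, so in the expression for $1/\gamma_k$ each summand over $j\neq k$ vanishes: the factor $|C_j|/p_j^*$ is $0$, while the accompanying ratio $(1-q/r_k)/(1-q/r_j)$ is finite and nonzero thanks to the hypothesis $r_k,r_j<q$. Hence $\gamma_k=r_k$, and an identical computation yields $\gamma_{k,l}=r_k$ for any $k\neq l$. Substituting these into the definitions of $R$ and $s$ in Theorem \ref{thm:main1} reproduces exactly the formulas of the corollary, and $q_j=p_k=1$ whenever $j\in C_k$ gives $\mathbf q=(1,\dots,1)$.

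The second step is to check the three standing assumptions of Theorem \ref{thm:main1}: (i) $\gamma_k=r_k\geq 1>0$; (ii) $0<\gamma_{k,l}=r_k<q$, so in particular $\leq q$; (iii) $|C_l|\gamma_{k,l}/p_l^*=0\leq 1$. All three hold automatically, so Theorem \ref{thm:main1} applies and delivers multiple $(s,\mathbf q)$-summability, which is precisely multiple $(s,1)$-summability, as required.

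No real obstacle is expected here: the content of the corollary is entirely contained in Theorem \ref{thm:main1}, and the only task is to verify that the general formulas degenerate to the clean expressions stated in the $p_k=1$ case.
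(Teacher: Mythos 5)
Your proposal is correct and is essentially the paper's own argument: the paper also proves this corollary by applying Theorem \ref{thm:main1} and noting that the hypotheses are automatic because $p_k^*=+\infty$, so that $\gamma_k=\gamma_{k,l}=r_k$ and the formulas for $R$ and $s$ reduce to those stated. Your write-up merely makes these routine verifications explicit.
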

This corollary is the main result of \cite{PopSinna13} which was itself an improved version of the main theorem of \cite{DPS10}. 
\begin{proof}
We may apply Theorem \ref{thm:main1}. Its assumptions are satisfied because $p_k^*=+\infty$.
\end{proof}
\begin{remark}
Observe that there is no restriction to assume $r_k< q$. Indeed, any linear map with value in a cotype $q$ space is always $(q,1)$-summing and we may apply Theorem \ref{thm:main3} to deduce that any multilinear map with value in a cotype $q$ space is always multiple $(q,1)$-summing, a result already observed in \cite[Theorem 3.2]{BPV04}
\end{remark}

Our second more appealing result happens when we start from a $(r_k,p_k)$-separately summing map (namely $|C_k|=1$ for all $k$) with $\frac1{r_k}-\frac{1}{p_k}=\theta\in (-\infty,0]$. In view of the inclusion theorem, this last assumption is not surprising. It implies that all the quotients
$$\frac{1-\frac q{r_k}-\frac{q|C_k|}{p_k^*}}{1-\frac q{r_j}-\frac{q |C_j|}{p_j^*}}$$
are equal to 1.

\begin{corollary}\label{cor:main}
Let $T:X_1\times\dots\times X_m\to Y$ with $Y$ a cotype $q$ space and $\mathbf p\in [1,+\infty)^m$. Assume that $T$ is $(r_k,p_k)$-summing in the $k$-th coordinate and that there exists $\theta<0$ such that $\frac1{r_k}-\frac1{p_k}=\theta$ for all $k$. Set
$$\frac 1\gamma=1+\theta-\sum_{k=1}^m\frac 1{p_k^*}.$$
\begin{enumerate}
\item If $\gamma\in (0,q)$, then $T$ is multiple $(s,\mathbf p)$-summing with
$$\frac 1s=\frac {m-1}{mq}+\frac{1}{\gamma m}.$$
\item If $\gamma\geq q$, then $T$ is multiple $(\gamma,\mathbf p)$-summing. 
\end{enumerate}
\end{corollary}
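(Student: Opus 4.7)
The plan is to derive this corollary from Theorems~\ref{thm:main1},~\ref{thm:main2}, and~\ref{thm:main3} applied with $n=m$ and $C_k=\{k\}$ (so $|C_k|=1$ for every $k$). The key simplification comes from the hypothesis $\frac{1}{r_k}-\frac{1}{p_k}=\theta$: a direct computation shows that
\[
1-\frac{q}{r_k}-\frac{q|C_k|}{p_k^*}=1-q\Bigl(\frac{1}{r_k}+1-\frac{1}{p_k}\Bigr)=1-q(1+\theta),
\]
independently of $k$, so every ratio appearing in the definitions of $\gamma_k$ and $\gamma_{k,J}$ collapses to $1$. Writing $\frac{1}{r_k}=\theta+1-\frac{1}{p_k^*}$, one then obtains
\begin{align*}
\frac{1}{\gamma_k} &= \frac{1}{r_k}-\sum_{j\neq k}\frac{1}{p_j^*} = 1+\theta-\sum_{j=1}^m\frac{1}{p_j^*} = \frac{1}{\gamma}, \\
\frac{1}{\gamma_{k,J}} &= \frac{1}{\gamma}+\sum_{j\in J}\frac{1}{p_j^*}.
\end{align*}
Thus every $\gamma_k$ equals the common value $\gamma$, and $\gamma_{k,J}$ depends only on $J$.

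For case (1), with $\gamma\in(0,q)$, I invoke Theorem~\ref{thm:main1}. The auxiliary conditions are easily checked: $\frac{1}{\gamma_{k,l}}=\frac{1}{\gamma}+\frac{1}{p_l^*}>\frac{1}{q}$ gives $\gamma_{k,l}<q$, and $\frac{\gamma_{k,l}}{p_l^*}\leq 1$ amounts to $\frac{1}{p_l^*}\leq\frac{1}{\gamma_{k,l}}$, which is obvious from the explicit formula. The summation $R=\sum_{k=1}^m\frac{\gamma}{q-\gamma}$ simplifies to $\frac{m\gamma}{q-\gamma}$, and $s=\frac{qR}{1+R}$ becomes, after a short algebraic manipulation,
\[
\frac{1}{s}=\frac{1}{q}+\frac{1}{qR}=\frac{q+(m-1)\gamma}{qm\gamma}=\frac{m-1}{mq}+\frac{1}{m\gamma},
\]
as claimed.

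Case (2) splits according to whether some $r_k$ already reaches $q$. If $r_{k_0}\geq q$ for some index $k_0$, then Theorem~\ref{thm:main3} applies at once, its output being $\frac{1}{s}=\frac{1}{r_{k_0}}-\sum_{j\neq k_0}\frac{1}{p_j^*}=\frac{1}{\gamma_{k_0}}=\frac{1}{\gamma}$. Otherwise every $r_k<q$, and I use Theorem~\ref{thm:main2} with $J$ chosen to be a maximal subset of $\{1,\dots,m\}$ satisfying $\sum_{j\in J}\frac{1}{p_j^*}\leq\frac{1}{q}-\frac{1}{\gamma}$. Condition~$(1)$ then says $\gamma_{k_0,J}\geq q$ for any $k_0\notin J$, which is precisely the defining inequality for $J$; condition~$(2)$ says $\gamma_{k,J\cup\{l\}}\leq q$ for $l\notin J$, which is the failure of that inequality at the next step, i.e.~the maximality of $J$; and condition~$(3)$ reduces to $\frac{1}{p_l^*}\leq\frac{1}{\gamma_{k,J\cup\{l\}}}$, immediate from the formula. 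One then reads off $\frac{1}{s}=\frac{1}{\gamma_{k_0,J}}-\sum_{j\in J}\frac{1}{p_j^*}=\frac{1}{\gamma}$.

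The main obstacle I foresee lies in the sub-case $\gamma\geq q$ with all $r_k<q$: one has to verify that the greedy $J$ above really leaves at least one index $k_0\notin J$ available. This is settled by the observation that pushing $J$ all the way to $\{1,\dots,m\}\setminus\{k_0\}$ would force $\gamma_{k_0,J}=r_{k_0}<q$, contradicting the inequality defining $J$; hence the greedy construction necessarily stops strictly before exhausting the indices. Everything else in the proof is routine arithmetic from the formulas above.
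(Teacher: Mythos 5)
Your argument is correct and follows essentially the same route as the paper: reduce to $C_k=\{k\}$, note that the hypothesis $\frac1{r_k}-\frac1{p_k}=\theta$ makes all the quotients equal to $1$ so that $\gamma_k=\gamma$ and $\frac1{\gamma_{k,J}}=\frac1\gamma+\sum_{j\in J}\frac1{p_j^*}$, then apply Theorem~\ref{thm:main1} when $\gamma<q$, Theorem~\ref{thm:main3} when some $r_k\geq q$, and Theorem~\ref{thm:main2} with a maximal admissible $J$ otherwise; your justification that the maximal $J$ leaves some $k_0$ outside matches the paper's observation that $\gamma_{k,\{1,\dots,n\}\setminus\{k\}}=r_k<q$.
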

\begin{proof}
Suppose first that $\gamma\in (0,q)$. Then with the notations of Theorem \ref{thm:main1}, $\gamma_k=\gamma$ for all $k$ and $\frac{1}{\gamma_{k,l}}=\frac1\gamma+\frac 1{p_l^*}$
for all $k\neq l$. This implies that $r_k<q$ and $\frac1{\gamma_{k,l}}\geq\frac 1{p_l^*}$.
Hence the assumptions of Theorem \ref{thm:main1} are satisfied and this leads to (1). To prove (2), we suppose first that $r_k<q$ for all $k$.
Let $J$ be a maximal set of $\{1,\dots,n\}$ such that there exists $k_0\notin J$ with $\gamma_{k_0,J}\geq q$. Such a set does exist since $\gamma_{1,\varnothing}=\gamma\geq q$
and $\gamma_{k,\{1,\dots,n\}\backslash \{k\}}=r_k<q$ for all $k$. This couple $J$ and $k_0$ being fixed, we may observe that for all $k,l\in\{1,\dots,n\}\backslash J$, $k\neq l$, 
$\gamma_{k,J\cup\{l\}}<q$ (otherwise $J$ would not be maximal) and 
$$\frac1{\gamma_{k,J\cup\{l\}}}=\frac 1{\gamma_{k,J}}+\frac 1{p_l^*}\geq \frac 1\gamma+\frac 1{p_l^*}\geq\frac 1{p_l^*}.$$
Thus we may apply Theorem \ref{thm:main2}. Finally, if $r_k\geq q$ for some $k$, then the result follows from Theorem \ref{thm:main3}.
\end{proof}

In turn, this last corollary implies several interesting results. First, half of Theorem \ref{thm:intro} may be deduced easily from it.
\begin{proof}[Proof of Theorem \ref{thm:intro} (without optimality)]
 Assume first that $t=p$. Then the conclusion follows directly from Corollary \ref{cor:main} with $r_k=r$ and $p_k=p$ for all $k$. Suppose now that $t>p$. Then, by the inclusion theorem
 for linear maps, $T$ is separately $(\rho,t)$-summing for $\frac1\rho=\frac 1r+\frac 1t-\frac 1p$. We conclude again by an application of Corollary \ref{cor:main} with $r_k=\rho$
 and $p_k=t$ for all $k$.
\end{proof}
We may also deduce from Corollary \ref{cor:main} a result of Praciano-Pereira \cite{Pr81} and Dimant/Sevilla-Peris \cite{DiSP14} which is an $m$-linear version of a famous bilinear inequality of Hardy and Littlewood \cite{HL34}. We state it in the spirit of \cite{PGVil04}.

\begin{corollary}\label{cor:praciano}
 Let $T:X_1\times\dots\times X_m\to \mathbb C$ be $m$-linear and let $\mathbf p=(p_1,\dots,p_m)\in [1,+\infty)^m$. Set
 $$\frac1\gamma=1-\sum_{k=1}^m \frac 1{p_k^*}.$$
 \begin{enumerate}
 \item If $\gamma\in (0,2)$ then $T$ is multiple $(s,\mathbf p)$-summing with 
 $$\frac 1s=\frac {m-1}{2m}+\frac1{m\gamma}.$$
 \item If $\gamma\geq 2$, then $T$ is multiple $(\gamma,\mathbf p)$-summing.
 \end{enumerate}
\end{corollary}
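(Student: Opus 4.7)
The plan is to apply Corollary \ref{cor:main} (or, directly, the underlying Theorems \ref{thm:main1}--\ref{thm:main3}) with $Y=\CC$, which has cotype $q=2$. For each $k$ the restriction of $T$ to its $k$-th coordinate is a bounded linear functional, hence trivially $(1,1)$-summing and, by the linear inclusion theorem, $(p_k,p_k)$-summing. So $T$ is separately $(p_k,p_k)$-summing, which is exactly the setting of Corollary \ref{cor:main} at the boundary value $\theta=0$; with $\theta=0$ the formula $\frac 1\gamma=1+\theta-\sum_k\frac 1{p_k^*}$ reproduces the $\gamma$ of the present statement.

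Although Corollary \ref{cor:main} is stated for $\theta<0$, its proof goes through verbatim at $\theta=0$. Indeed, with $r_k=p_k$ one has $\frac 1{r_k}+\frac 1{p_k^*}=1$, so the ratios $\frac{1-q/r_k-q/p_k^*}{1-q/r_j-q/p_j^*}$ that appear in Theorems \ref{thm:main1} and \ref{thm:main2} are all equal to $1$, and the computation used in the proof of Corollary \ref{cor:main} then yields $\gamma_k=\gamma$ and $\frac 1{\gamma_{k,l}}=\frac 1\gamma+\frac 1{p_l^*}$. The only work left is to check the side hypotheses of the three main theorems in each regime.

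In case $\gamma\in(0,2)$, the assumption $\sum_k (1-\frac 1{p_k})=1-\frac 1\gamma<\frac 12$ forces $p_k<2$ for every $k$, so $r_k=p_k<q$ fits Theorem \ref{thm:main1}; positivity of $\gamma_k$ and the constraints on $\gamma_{k,l}$ are direct consequences of $\gamma<2$, and the formula $R=\sum_k\frac{\gamma}{2-\gamma}=\frac{m\gamma}{2-\gamma}$ gives $\frac 1s=\frac{m-1}{2m}+\frac 1{m\gamma}$. In case $\gamma\ge 2$, I would split according to whether all $p_k<2$, in which case Theorem \ref{thm:main2} applies with $J=\emptyset$ (its three hypotheses reduce to $\sum_{j\neq l}\frac 1{p_j^*}\le\sum_j\frac 1{p_j^*}=1-\frac 1\gamma\le\frac 12$, which is immediate), or some $p_{k_0}\ge 2$, in which case $r_{k_0}=p_{k_0}\ge q$ and Theorem \ref{thm:main3} applies directly with $\frac 1s=\frac 1{p_{k_0}}-\sum_{j\neq k_0}\frac 1{p_j^*}=\frac 1\gamma$. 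The main obstacle is simply the bookkeeping needed to handle the two sub-cases of (2) and the boundary parameter $\theta=0$; no new ideas beyond the three main theorems are required.
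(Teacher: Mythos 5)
Your overall route — reduce to Corollary \ref{cor:main} with $r_k=p_k$, $\theta=0$, noting that the ratios in Theorems \ref{thm:main1}--\ref{thm:main3} all equal $1$ — is exactly the paper's route (the paper simply writes that the corollary follows from Corollary \ref{cor:main} because every linear form is $(p,p)$-summing, and the text preceding that corollary indeed allows $\theta\in(-\infty,0]$). Your treatment of case (1) and of the sub-case of (2) where some $p_{k_0}\geq 2$ (via Theorem \ref{thm:main3}, giving $\frac1s=\frac1{p_{k_0}}-\sum_{j\neq k_0}\frac1{p_j^*}=\frac1\gamma$) is correct.

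However, your handling of case (2) when all $p_k<2$ has a genuine error: you claim Theorem \ref{thm:main2} applies with $J=\emptyset$ because $\sum_j\frac1{p_j^*}=1-\frac1\gamma\leq\frac12$, but this inequality is reversed in the regime you are in: $\gamma\geq 2$ gives $\frac1\gamma\leq\frac12$, hence $\sum_j\frac1{p_j^*}\geq\frac12$. Consequently hypothesis (2) of Theorem \ref{thm:main2}, namely $\gamma_{k,\{l\}}\leq q=2$, can fail for $J=\emptyset$: since $\frac1{\gamma_{k,\{l\}}}=\frac1\gamma+\frac1{p_l^*}=1-\sum_{j\neq l}\frac1{p_j^*}$, take for instance $m=3$ and $p_1=p_2=p_3=\frac75$ (so $p_k^*=\frac72$, $\frac1\gamma=\frac17$, $\gamma=7\geq 2$, all $p_k<2$); then $\gamma_{k,\{l\}}=\frac73>2$, so the hypothesis is violated and your application of Theorem \ref{thm:main2} breaks down. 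This is precisely why the paper's proof of Corollary \ref{cor:main}(2) does not take $J=\emptyset$ but a \emph{maximal} $J$ admitting some $k_0\notin J$ with $\gamma_{k_0,J}\geq q$: maximality forces $\gamma_{k,J\cup\{l\}}<q$ for $k,l\notin J$, the inequality $\frac1{\gamma_{k,J\cup\{l\}}}=\frac1{\gamma_{k,J}}+\frac1{p_l^*}\geq\frac1{p_l^*}$ gives hypothesis (3), and since at $\theta=0$ one has $\frac1{\gamma_{k_0,J}}=\frac1\gamma+\sum_{j\in J}\frac1{p_j^*}$, the resulting exponent is still $\frac1s=\frac1\gamma$ (in the example above the maximal $J$ is a singleton, not $\emptyset$). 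So the fix is simply to invoke the maximal-$J$ argument of Corollary \ref{cor:main} verbatim at $\theta=0$ rather than replacing it by the choice $J=\emptyset$.
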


\begin{proof}
This follows immediately from Corollary \ref{cor:main} since any linear form is $(p,p)$-summing. 
\end{proof}
Observe finally that Theorem \ref{thm:intro} extends also Theorem 1.2 of \cite{DiSP14}.

\smallskip

\noindent \textsc{Notations.} Part of the notations we shall use was already introduced at the beginning of this section. We shall also denote by $(e_i)_{i\in\mathbb N}$ the standard basis of $\ell_p$ and $e_\bi$, $\bi\in\mathbb N^m$, will mean $(e_{i_1}(1),\dots,e_{i_m}(m))$ where $(e_i(j))_i$ is a copy of $(e_i)_i$. For $u\in\prod_{j=1}^m \ell_{p_j}$, $\bi\in\mathbb N^m$ and $\alpha\in\mathbb R$, $u_\bi$ will stand for $u_{i_1}(1)\times \cdots\times u_{i_m}(m)$ and $u_\bi^\alpha$ for $u_{i_1}(1)^\alpha \times \cdots\times u_{i_m}(m)^\alpha$. As indicated above, if $(a_\bi)_{\bi\in\mathbb N^m}$ is a sequence indexed by $\mathbb N^m$ and $C\subset \{1,\dots,m\}$, we shall identify $\bi$ with $\bj,\bk$ with $\bj=\bi(C)$, $\bk=\bi(\bar C)$ so that we shall write $a_\bi=a_{\bj,\bk}$.

\section{Useful lemmas}
\subsection{Coefficients of non-negative $m$-linear forms}
We shall need the following non-negative version of a theorem of Praciano-Pereira \cite{Pr81}. It already appears in \cite{Lac13} for bilinear forms.
\begin{proposition}\label{prop:praciano}
Let $m\geq 1$, $1\leq p_1,\dots,p_m\leq+\infty$ and $A:\ell_{p_1}\times\cdots\times\ell_{p_m}\to \mathbb C$ be a non-negative $m$-linear form. Then 
$$\left(\sum_{\bi\in\mathbb N^m}A(e_\bi)^\rho\right)^{1/\rho}\leq\|A\|$$
provided $\rho^{-1}=1-\sum_{j=1}^m p_j^{-1}>0$.
\end{proposition}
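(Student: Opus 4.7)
The plan is to produce explicit test vectors $(x^{(j)})_{j=1}^m\in\prod_j\ell_{p_j}$ in the spirit of the proof for bilinear forms in \cite{Lac13}, exploiting non-negativity to avoid any stochastic or duality gymnastics. First I would homogenize to $\|A\|=1$ and truncate the sum to a finite subset $F\subset\mathbb N^m$: writing $a_\bi=A(e_\bi)\geq 0$ and $S=\sum_{\bi\in F}a_\bi^\rho$, the target becomes $S\leq 1$ uniformly in $F$, since the full statement will follow by letting $F$ exhaust $\mathbb N^m$. Note that the hypothesis forces $\rho\geq 1$, so powers $a_\bi^{\rho-1}$ are unambiguous.

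The key construction is to choose, for each $j$,
$$x^{(j)}_{i_j}=\left(\sum_{\bi'\in F:\,i'_j=i_j}a_{\bi'}^\rho\right)^{1/p_j},$$
a finitely supported non-negative element of $\ell_{p_j}$. Two facts will essentially close the loop. A Fubini-style telescoping gives $\|x^{(j)}\|_{p_j}^{p_j}=\sum_{i_j}\sum_{\bi':i'_j=i_j}a_{\bi'}^\rho=S$, hence $\prod_j\|x^{(j)}\|_{p_j}=S^{\sum_j 1/p_j}=S^{1-1/\rho}$. On the other hand, because $x^{(j)}_{i_j}$ dominates $a_\bi^{\rho/p_j}$ for every $\bi\in F$ with $j$-th coordinate $i_j$, and $\sum_j 1/p_j=1-1/\rho$, one obtains the pointwise bound $\prod_j x^{(j)}_{i_j}\geq a_\bi^{\rho-1}$ on $F$. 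Multiplying by $a_\bi\geq 0$ and summing yields
$$S\leq\sum_{\bi\in F}a_\bi\prod_j x^{(j)}_{i_j}\leq A\bigl(x^{(1)},\dots,x^{(m)}\bigr)\leq\prod_j\|x^{(j)}\|_{p_j}=S^{1-1/\rho},$$
where the second inequality simply throws in the remaining non-negative terms indexed by $\bi\notin F$. This forces $S^{1/\rho}\leq 1$.

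The only conceptual input is guessing the exponent $1/p_j$; it is forced by the twin requirements that $\|x^{(j)}\|_{p_j}^{p_j}$ telescope cleanly to $S$ and that the exponent in $\prod_j x^{(j)}_{i_j}\geq a_\bi^{\,?}$ land exactly on $\rho-1$. I do not anticipate any substantive obstacle beyond this; non-negativity of $a_\bi$ is used twice (to apply the multilinear estimate without absolute values and to discard the terms outside $F$), while the degenerate situations in which some $p_j=\infty$ or some $a_\bi$ vanishes are handled harmlessly by dropping zero-coefficient indices from $F$ and reading $t^0=1$ for $t>0$.
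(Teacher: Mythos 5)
Your proof is correct, but it takes a genuinely different route from the paper's. The paper argues by induction on $m$, the key tool being Schep's factorization theorem for non-negative multilinear maps \cite{Schep84} (the multilinear analogue of Maurey's factorization): the $(m+1)$-linear form is viewed as an $m$-linear map $B$ into $\ell_{p_{m+1}^*}$, factored as $B=M_\phi C$ through $\ell_r$, the induction hypothesis is applied to the slice forms $C_j$, and the sharp constant is recovered by taking the infimum over factorizations. Your argument is instead a direct, self-contained test-vector computation in the spirit of \cite{Lac13}: after normalizing $\|A\|=1$ and truncating to a finite $F$, you feed $A$ the finitely supported vectors $x^{(j)}_{i_j}=\bigl(\sum_{\bi'\in F,\ i'_j=i_j}a_{\bi'}^\rho\bigr)^{1/p_j}$, use non-negativity twice (once for the pointwise bound $\prod_j x^{(j)}_{i_j}\geq a_\bi^{\rho-1}$ inserted into the finite multilinear expansion of $A(x^{(1)},\dots,x^{(m)})$, once to discard the expansion terms with $\bi\notin F$), and close with the self-improving inequality $S\leq S^{1-1/\rho}$, which forces $S\leq 1$. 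The exponent bookkeeping is exact: $\rho\sum_j 1/p_j=\rho-1$ and $\prod_j\|x^{(j)}\|_{p_j}=S^{1-1/\rho}$, and your conventions dispose of the cases $p_j=+\infty$ and $a_\bi=0$. As for what each approach buys: yours is entirely elementary, needs no external factorization theorem, no induction, and delivers the sharp constant in one stroke; the paper's proof is heavier but places the inequality inside the Maurey--Schep factorization framework, exhibiting structural information (a factorization of the form through $\ell_r$ with a multiplier in $\ell_s$) beyond the coefficient estimate itself.
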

Here, non-negative simply means that for any $\bi\in\mathbb N^m$, $A(e_\bi)\geq 0$. 
\begin{proof}
We shall give a proof by induction on $m$. Our main tool is the following factorization result of Schep \cite{Schep84} which extends to multilinear maps a result of Maurey \cite{Mau74}.
\begin{lemma}\label{lem:schep}
Let $B:\ell_{p_1}\times\cdots\times\ell_{p_m}\to \ell_q$ be a non-negative $m$-linear map such that $r\geq \max(q,1)$ with $r^{-1}=p_1^{-1}+\dots+p_m^{-1}$. Then there exist a non-negative $\phi\in\ell_s$ with $s^{-1}=q^{-1}-r^{-1}$ and a non-negative $m$-linear map $C:\ell_{p_1}\times\cdots\times\ell_{p_m}\to \ell_r$ such that $B=M_\phi C$ where $M_\phi$ is the operator of multiplication by $\phi$. Moreover, $\|B\|=\inf \|\phi\|_s\|C\|$ where the infimum is taken over all possible factorizations.
\end{lemma}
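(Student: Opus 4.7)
The proof will proceed by induction on $m$, with Schep's factorization (Lemma~\ref{lem:schep}) serving as the main tool for the inductive step.

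For the base case $m=1$, the form $A:\ell_{p_1}\to\mathbb{C}$ is determined by the non-negative sequence $(A(e_i))_i$, and since $\rho=p_1^*$, the inequality is simply the identification $\ell_{p_1}^*=\ell_{p_1^*}$ restricted to non-negative sequences (with equality).

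For the inductive step with $m\geq 2$, I will linearize $A$ in the last variable: set $B(x_1,\ldots,x_{m-1})_i:=A(x_1,\ldots,x_{m-1},e_i)$, so that $B$ is a non-negative $(m-1)$-linear map from $\ell_{p_1}\times\cdots\times\ell_{p_{m-1}}$ into $\ell_{p_m^*}$ with $\|B\|=\|A\|$. I would then apply Lemma~\ref{lem:schep} with $q=p_m^*$ and $1/r=\sum_{j<m}1/p_j$; the required condition $r\geq\max(q,1)$ is equivalent to $\sum_{j=1}^m 1/p_j\leq 1$, which holds by hypothesis. The lemma supplies a non-negative $\phi\in\ell_s$ with $1/s=1/q-1/r=1/\rho$ (so $s=\rho$) together with a non-negative $(m-1)$-linear map $C:\ell_{p_1}\times\cdots\times\ell_{p_{m-1}}\to\ell_r$ such that $B=M_\phi C$, and with $\|\phi\|_\rho\|C\|$ arbitrarily close to $\|A\|$.

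For each fixed $i_m$ the slice $C_{i_m}(x_1,\ldots,x_{m-1}):=C(x_1,\ldots,x_{m-1})_{i_m}$ is a non-negative $(m-1)$-linear form with $\|C_{i_m}\|\leq\|C\|$ (since $|y_{i_m}|\leq\|y\|_\infty\leq\|y\|_r$ for $y\in\ell_r$). The induction hypothesis, applied to $C_{i_m}$ at the exponent $\rho'$ defined by $1/\rho'=1-\sum_{j<m}1/p_j>0$, yields
$$\sum_{i_1,\ldots,i_{m-1}} C_{i_m}(e_{i_1},\ldots,e_{i_{m-1}})^{\rho'}\leq \|C\|^{\rho'}.$$
The decisive observation is that $1/\rho=1/\rho'-1/p_m$ implies $\rho\geq\rho'$, so by the monotonicity of sequence $\ell_p$-norms ($\|\cdot\|_\rho\leq\|\cdot\|_{\rho'}$) the same estimate persists with $\rho'$ replaced by $\rho$:
$$\sum_{i_1,\ldots,i_{m-1}} C_{i_m}(e_{i_1},\ldots,e_{i_{m-1}})^\rho \leq \|C\|^\rho.$$
Combining this with the factorization $A(e_\bi)=\phi_{i_m}\,C_{i_m}(e_{i_1},\ldots,e_{i_{m-1}})$ gives
$$\sum_{\bi\in\mathbb N^m} A(e_\bi)^\rho = \sum_{i_m}\phi_{i_m}^\rho\sum_{i_1,\ldots,i_{m-1}}C_{i_m}(e_{i_1},\ldots,e_{i_{m-1}})^\rho \leq \|C\|^\rho\|\phi\|_\rho^\rho,$$
and passing to the infimum in Schep's factorization drives the right-hand side down to $\|A\|^\rho$. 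The only genuine obstacle is to keep track of the numerical relations between $r$, $s$, $\rho$ and $\rho'$, and especially to notice the correct direction ($\rho\geq\rho'$) of the $\ell_p$-norm comparison, which is what allows the induction hypothesis at level $m-1$ to be upgraded from exponent $\rho'$ to the desired exponent $\rho$.
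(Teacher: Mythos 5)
Your proposal does not prove the statement at hand. The statement is Schep's factorization lemma: for a non-negative $m$-linear map $B:\ell_{p_1}\times\cdots\times\ell_{p_m}\to\ell_q$ one must \emph{construct} a non-negative multiplier $\phi\in\ell_s$ and a non-negative $m$-linear map $C$ into $\ell_r$ with $B=M_\phi C$ and $\|B\|=\inf\|\phi\|_s\|C\|$. What you have written is instead a proof of Proposition \ref{prop:praciano} (the coefficient estimate $\left(\sum_{\bi}A(e_\bi)^\rho\right)^{1/\rho}\leq\|A\|$), and, decisively, your inductive step invokes Lemma \ref{lem:schep} as its main tool. As a proof of the lemma itself this is circular: the factorization whose existence is to be established is assumed from the outset, and at no point do you produce $\phi$ or $C$ or verify the norm identity.

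For what it is worth, the induction you describe is essentially the paper's own proof of Proposition \ref{prop:praciano} (including the passage from the exponent $\rho'$ of the induction hypothesis to the larger exponent $\rho$ via monotonicity of $\ell_p$-norms, and the supremum over the coordinate slices of $C$), so that portion is sound as a proof of the \emph{proposition}. But the lemma is not proved in the paper either: it is imported from Schep \cite{Schep84}, which extends Maurey's factorization theorem \cite{Mau74} to positive multilinear maps. A genuine proof would require a different kind of argument --- for instance a Maurey-style convexity/minimax argument exploiting positivity to extract the multiplier $\phi$ from the boundedness of $B$ --- and nothing of that nature appears in your proposal.
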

Let us come back to the proof of Proposition \ref{prop:praciano}. The result is clear for $m=1$ (it does not require positivity) and let us assume that it is true for $m$-linear forms, $m\geq 1$. Let $A:\ell_{p_1}\times\cdots\times\ell_{p_{m+1}}\to \mathbb C$
be a non-negative $(m+1)$-linear form. It defines a bounded $m$-linear map $B:\ell_{p_1}\times\cdots\times\ell_{p_m}\to \ell_{p_{m+1}^*}$ by $\langle e_j,B(x)\rangle=A(x,e_j)$. By Lemma \ref{lem:schep}, 
$B$ factors through $\ell_r$,
$r^{-1}=p_1^{-1}+\dots+p_m^{-1}$; namely we may write $B=M_\phi C$ with $\phi\in\ell_s$, $s^{-1}=1-p_1^{-1}-\dots-p_{m+1}^{-1}$ and $C:\ell_{p_1}\times\cdots\times \ell_{p_m}\to\ell_{r}$ a non-negative continuous
$m$-linear map. Thus, writing $a_{\bi,j}=A(e_\bi,e_j)=\langle e_j,B(e_\bi)\rangle$, $c_{\bi,j}=\langle e_j,C(e_\bi)\rangle$, $\bi\in\mathbb N^m$, $j\in\mathbb N$, we get
\begin{eqnarray*}
\left(\sum_{j\in\mathbb N}\sum_{\bi\in\mathbb N^m}a_{\bi,j}^s\right)^{1/s} &=&\left(\sum_{j\in\mathbb N} \phi_j^s\sum_{\bi\in\mathbb N^m}c_{\bi,j}^s\right)^{1/s}\\
&\leq&\|\phi\|_s\sup_{j\in\mathbb N} \left(\sum_{\bi\in\mathbb N^m}c_{\bi,j}^s\right)^{1/s}.
\end{eqnarray*}
Define now $C_j:\ell_{p_1}\times\cdots\times\ell_{p_m}\to \mathbb C$ by $C_j(x)=\langle e_j,C(x)\rangle$. Then $C_j$ is a bounded non-negative $m$-linear form with $\|C_j\|\leq \|C\|$, and by the induction hypothesis, since $s\geq t$ where $t^{-1}=1-p_1^{-1}-\dots-p_m^{-1}$, we have
$$\left(\sum_{\bi\in\mathbb N^m} c_{\bi,j}^s\right)^{1/s}\leq \|C\|.$$
The result now follows by taking the infimum over all possible factorizations of $A$.
\end{proof}

\begin{remark}
 The example of $A(x(1),\dots,x(m))=\sum_{i=1}^n x_i(1)\cdots x_i(m)$ shows that the constant $\rho$ in Proposition \ref{prop:praciano} is optimal.
\end{remark}

\subsection{An abstract Hardy-Littlewood method}
To prove their bilinear inequality on $\ell_p$-spaces in \cite{HL34}, Hardy and Littlewood have introduced a methode to go from $\ell_p$ to $c_0$ and back again.
This was performed several times later (for instance in \cite{Pr81}, \cite{BayPelSeo13} or \cite{DiSP14}). We shall develop
here an abstract version of this machinery, first in the bilinear case and then in the $m$-linear one.
\begin{lemma}\label{lem:hardylittlewood}
Let $m_1,m_2\geq 1$, $p_1,p_2,q\in [1,+\infty)$, $(a_{\bi,\bj})_{\bi\in\mathbb N^{m_1},\ \bj\in \mathbb N^{m_2}}$ a sequence of non-negative real numbers.
Assume that there exists $\kappa>0$ and $0<\alpha,\beta\leq q$ such that
\begin{itemize}
\item for all $u\in \prod_{j=1}^{m_1}B_{\ell_{p_1}}$, 
$$\left(\sum_{\bj\in\mathbb N^{m_2}}\left(\sum_{\bi\in \mathbb N^{m_1}}u_\bi^q a_{\bi,\bj}^q\right)^{\alpha/q}\right)^{1/\alpha}\leq \kappa;$$
\item for all $v\in \prod_{j=1}^{m_2}B_{\ell_{p_2}}$, 
$$\left(\sum_{\bi\in\mathbb N^{m_1}}\left(\sum_{\bj\in \mathbb N^{m_2}}v_\bj^q a_{\bi,\bj}^q\right)^{\beta/q}\right)^{1/\beta}\leq \kappa.$$
\end{itemize}
Then 
$$\left(\sum_{{\bj\in \mathbb N^{m_2}}}\left(\sum_{\bi\in\mathbb N^{m_1}}a_{\bi,j}^q\right)^{\gamma/q}\right)^{1/\gamma}\leq \kappa$$
where
$$\frac1\gamma=\frac1\alpha-\frac {m_1}{p_1}\left(\frac{1-\frac q\alpha-\frac {m_2q}{p_2}}{1-\frac {q}\beta-\frac{m_1q}{p_1}}\right)$$
provided $\gamma>0$, $\frac{m_1 \alpha}{p_1}\leq 1$ and $\frac{m_2 \beta}{p_2}\leq 1$.
\end{lemma}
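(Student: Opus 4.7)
The plan is to adapt the classical Hardy--Littlewood passage ``from $\ell_p$ to $c_0$ and back'' to this mixed-norm setting. The two hypotheses are weighted mixed-norm estimates on the non-negative matrix $(a_{\bi,\bj}^q)$, and the conclusion is a single unweighted mixed-norm estimate; the bridge will be a H\"older interpolation, combined with Proposition \ref{prop:praciano}, which converts a weighted sum into a diagonal one with a controlled loss depending only on $m_1,m_2,p_1,p_2,\alpha,\beta,q$.

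First I would reduce to the case of finitely supported $(a_{\bi,\bj})$ by a truncation and monotone convergence argument so that the sums can be freely rearranged. Next, writing $b_\bj := \bigl(\sum_\bi a_{\bi,\bj}^q\bigr)^{1/q}$ and $c_\bi := \bigl(\sum_\bj a_{\bi,\bj}^q\bigr)^{1/q}$, I would split
$$\sum_\bi a_{\bi,\bj}^q \;=\; \sum_\bi a_{\bi,\bj}^{q\theta}\cdot a_{\bi,\bj}^{q(1-\theta)}$$
and apply H\"older in $\bi$ with a conjugate pair chosen so that, after multiplying the two pieces by weights that are powers of the $b_\bj$'s and $c_\bi$'s respectively, the first factor appears on the left-hand side of Hypothesis 2 and the second factor appears on the left-hand side of Hypothesis 1. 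The two weight families must belong to $\prod B_{\ell_{p_2}}$ and $\prod B_{\ell_{p_1}}$; this is where the constraints $m_1\alpha/p_1\leq 1$ and $m_2\beta/p_2\leq 1$ enter, as they are exactly what is needed for the chosen powers of $b_\bj$ and $c_\bi$ to lie in the respective unit balls by the subsequent estimate from the other hypothesis.

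After taking the $\gamma$-norm over $\bj$ of the resulting pointwise H\"older bound and applying H\"older once more in $\bj$ to separate the two factors, I would set the free parameter $\theta$ (and the compatible H\"older exponent used for the second application) so that the contribution from $\bj$ is exactly the $\ell^\alpha_\bj$-norm hypothesis while the contribution from $\bi$ is exactly the $\ell^\beta_\bi$-norm hypothesis. Solving the resulting linear system in the exponents, and demanding that the exponent of $b_\bj$ on the left side be $\gamma/q$, pins down the unique $\gamma$ given by
$$\frac1\gamma=\frac1\alpha-\frac {m_1}{p_1}\!\left(\frac{1-\frac q\alpha-\frac {m_2q}{p_2}}{1-\frac {q}\beta-\frac{m_1q}{p_1}}\right).$$
The conditions $\alpha,\beta\leq q$ guarantee that the outer H\"older exponents $q/\alpha$ and $q/\beta$ in the intermediate steps are at least $1$, so all applications of H\"older are legitimate.

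The main obstacle is the bookkeeping of exponents in the two H\"older steps: there is exactly one choice of $\theta$ and one compatible choice of the auxiliary interpolation parameter that simultaneously makes the weights on $b_\bj$ and $c_\bi$ admissible, keeps all H\"older exponents $\geq 1$, and produces the prescribed exponent of $b_\bj$ on the left-hand side. The asymmetric form of $\gamma$, with $\alpha$ appearing alone in the leading $1/\alpha$ and $\beta$ entering only in the denominator of the correction, reflects the fact that the conclusion is a norm in $\bj$: Hypothesis 1 plays the primary role, while Hypothesis 2 merely supplies the H\"older partner whose contribution must be re-expressed back in $\bj$ through the factor $1-q/\beta-m_1 q/p_1$.
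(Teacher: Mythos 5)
Your plan is in the spirit of the paper's proof (a Hardy--Littlewood bootstrap), but the central step as you describe it fails in the multi-index setting. The hypotheses quantify only over \emph{product} weights: an element of $\prod_{j=1}^{m_1}B_{\ell_{p_1}}$ is a weight of the form $u_\bi=u_{i_1}(1)\cdots u_{i_{m_1}}(m_1)$ with each factor a single sequence in $B_{\ell_{p_1}}$. The weights you propose to insert --- normalized powers of $b_\bj=\big(\sum_\bi a_{\bi,\bj}^q\big)^{1/q}$ and $c_\bi=\big(\sum_\bj a_{\bi,\bj}^q\big)^{1/q}$ --- depend jointly on all coordinates of the multi-index and are not of product form, so Hypotheses 1 and 2 simply do not apply to them once $m_1$ or $m_2\geq 2$ (for $m_1=m_2=1$ your scheme is the classical argument, where self-normalized powers of $c_i$ do lie in $B_{\ell_{p_1}}$; that is exactly what breaks down here). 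Bridging non-product multi-index sums and product weights is the real difficulty, and the paper does it by invoking Proposition \ref{prop:praciano} in duality form, twice: first applied to the non-negative $m_2$-linear form with coefficients $S_\bj^{\gamma'}$, where $S_\bj=b_\bj$, to dominate $\sum_\bj S_\bj^{\gamma}$ by $\big(\sup_w\sum_\bj w_\bj S_\bj^{\gamma'}\big)^{\rho}$ with $w$ ranging over product weights in $\prod B_{\ell_\theta}$, and later to an $m_1$-linear form in the $\bi$ variable to dualize the remaining unweighted sum over $\bi$. You mention Proposition \ref{prop:praciano} as a ``bridge'', but in your sketch it is never used where it is actually indispensable, and the role you assign it (making your non-product weights admissible) is not something it can do.

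Two further points. The paper's bootstrap also relies on writing $w_\bj S_\bj^{\gamma'}=w_\bj S_\bj^{\gamma'-q}\sum_\bi a_{\bi,\bj}^q$ and, after the last H\"older step, on the normalization $\big(\sum_\bi a_{\bi,\bj}^q/S_\bj^q\big)^{1/r}=1$; without this self-referential step, a single splitting $a^{q\theta}\cdot a^{q(1-\theta)}$ followed by two H\"older applications does not produce the stated, non-symmetric value of $\gamma$. Moreover, your explanation of the side conditions is off: $m_1\alpha/p_1\leq1$, $m_2\beta/p_2\leq1$ and $\beta\leq q$ are not there to place weights in unit balls; in the paper they guarantee $1\leq s\leq t$ (so the auxiliary exponent $\omega$ exists) and $\rho>0$, i.e.\ that the two applications of Proposition \ref{prop:praciano} and of H\"older's inequality are legitimate. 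As it stands, the proposal has a genuine gap at its core, and the exponent bookkeeping that would pin down $\gamma$ is not carried out.
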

\begin{proof}
For $\bj\in \mathbb N^{m_2}$, we denote $S_\bj=\left(\sum_{\bi\in\mathbb N^{m_1}}a_{\bi,\bj}^q\right)^{1/q}$. Let also $\theta>0$ with $m_2/\theta<1$ and let $1/\rho=1-m_2/\theta$. 
For any $\gamma\in\mathbb R$, we may write
\begin{eqnarray*}
 \sum_{\bj}S_\bj^\gamma&=&\sum_{\bj}S_\bj^{\left(\frac\gamma\rho\right)\rho}\\
&\leq&\left(\sup_{w\in\prod_{j=1}^{m_2}B_{\ell_\theta}}\sum_\bj w_\bj S_\bj^{\frac{\gamma}{\rho}}\right)^{\rho}
\end{eqnarray*}
where we have used Proposition \ref{prop:praciano}. We then set $\gamma'=\gamma/\rho$ and we write
for $w\in\prod_{j=1}^{m_2}B_{\ell_\theta}$,
\begin{eqnarray*}
\sum_{\bj}w_\bj S_\bj^{\gamma'}&=&\sum_\bj  w_\bj S_\bj^{\gamma'-q}\sum_{\bi}a_{\bi,\bj}^{q}\\
&=&\sum_{\bi}\sum_{\bj}\frac{w_\bj a_{\bi,\bj}^q}{S_\bj^{q-\gamma'}}\\
&\leq&\sum_{\bi}\left(\sum_\bj \frac{a_{\bi,\bj}^q}{S_\bj^{(q-\gamma')s}}\right)^{1/s}\left(\sum_\bj w_\bj^{s^*}a_{\bi,\bj}^q\right)^{1/s^*}\\
&\leq&\left(\sum_{\bi}\left(\sum_\bj \frac{a_{\bi,\bj}^q}{S_\bj^{(q-\gamma')s}}\right)^{t/s}\right)^{1/t}\left(\sum_{\bi}\left(\sum_\bj w_{\bj}^{s^*}a_{\bi,\bj}^q\right)^{t^*/s^*}\right)^{1/t^*}
\end{eqnarray*}
where $(s,s^*)$ and $(t,t^*)$ are two couples of conjugate exponents such that $t^*/s^*=\beta/q$. 
Now, $w^{s^*/q}$ belongs to $\prod_{j=1}^{m_2}B_{\ell_{\theta q/s^*}}$. Thus, if we can set $\theta=\frac{p_2 s^*}q$,
then we can deduce that
$$\sum_{\bj} w_\bj S_{\bj}^{\gamma'}\leq \kappa^{1/t^*}\left(\sum_{\bi}\left(\sum_{\bj} \frac{a_{\bi,\bj}^q}{S_{\bj}^{(q-\gamma')s}}\right)^{t/s}\right)^{1/t}.$$
We then apply Proposition \ref{prop:praciano} to the $m$-linear form defined on $\prod_{k=1}^{m_1} \ell_{\omega}$ by
$A(e_\bi)=\sum_{\bj\in \mathbb N^{m_2}}\frac{a_{\bi,\bj}^q}{S_{\bj}^{(q-\gamma')s}}$ where 
$$\frac {m_1}\omega=1-\frac st$$
(this requires $s\leq t$). We obtain
$$\sum_{\bi\in\mathbb N^{m_1}}\left(\sum_{\bj\in \mathbb N^{m_2}}\frac{a_{\bi,\bj}^q}{S_\bj^{(q-\gamma')s}}\right)^{t/s}\leq\left( \sup_{y\in\prod_{k=1}^{m_1} B_{\ell_\omega}}
\sum_{\bi\in\mathbb N ^{m_1}}y_{\bi}\sum_{\bj\in \mathbb N^{m_2}}\frac{a_{\bi,\bj}^q}{S_\bj^{(q-\gamma')s}}\right)^{t/s}.$$
Fix now $y\in\prod_{k=1}^{m_1} B_{\ell_\omega}$ and let us apply another time H\"older's inequality with $r$ satisfying $(q-\gamma')sr=q$. We get
\begin{eqnarray*}
\sum_{\bi}y_{\bi}\sum_{\bj}\frac{a_{\bi,\bj}^q}{S_{\bj}^{(q-\gamma')s}}&=&\sum_\bj\sum_{\bi}
y_{\bi}\frac{a_{\bi,\bj}^q}{S_\bj^{(q-\gamma')s}}\\
&\leq&\sum_\bj \underbrace{\left(\sum_{\bi}\frac{a_{\bi,\bj}^q}{S_\bj^q}\right)^{1/r}}_{=1}\left(\sum_{\bi}y_{\bi}^{r^*}a_{\bi,\bj}^q\right)^{1/r^*}.
\end{eqnarray*}
We may then conclude provided 
$$\frac{r^*}{\omega}=\frac q{p_1}\textrm{ and }\frac 1{r^*}=\frac\alpha q.$$
All the conditions imposed on $r,s,t$ and $\omega$ fix the value of $\gamma'$. Indeed, we get successively
$$\frac1\omega=\frac{q}{p_1r^*}=\frac{\alpha}{p_1},\ s=\left(1-\frac{m_1\alpha}{p_1}\right)t,$$
$$t=\frac{1-\frac{\beta}q\left(1-\frac{m_1\alpha}{p_1}\right)}{\left(1-\frac{m_1\alpha}{p_1}\right)\left(1-\frac\beta q\right)} \textrm{ since } \frac{t^*}{s^*}=\frac{\beta}q,$$
$$s=\frac{1-\frac{\beta}q\left(1-\frac{m_1\alpha}{p_1}\right)}{1-\frac\beta q},\ \gamma'=q\left(1-\frac{\left(1-\frac\alpha q\right)\left(1-\frac\beta q\right)}{1-\frac\beta q+\frac{\alpha \beta m_1}{p_1q}}\right).$$
We may then compute $\gamma$ by checking that
\begin{eqnarray*}
\frac1\rho&=&1-\frac{m_2 q}{p_2s^*}\\
&=&1-\frac{\frac{\alpha\beta m_1m_2}{p_1p_2}}{1-\frac\beta q+\frac{\alpha\beta m_1}{p_1q}}.
\end{eqnarray*}
We finally deduce that
\begin{eqnarray*}
\gamma&=&\gamma'\rho\\
&=&\alpha\frac{1-\frac\beta q+\frac{\beta m_1}{p_1}}{1-\frac\beta q+\frac{\alpha\beta m_1}{p_1 q}-\frac{\alpha\beta m_1m_2}{p_1p_2}}
\end{eqnarray*}
which leads  to
\begin{eqnarray*}
\frac 1\gamma&=&\frac{1-\frac\beta q+\frac{\alpha\beta m_1}{p_1q}-\frac{\alpha \beta m_1m_2}{p_1p_2}}
{\alpha\left(1-\frac\beta q+\frac{\beta m_1}{p_1}\right)}\\
&=&\frac 1\alpha+\frac{m_1}{p_1}\frac{\frac{\alpha\beta}q-\beta-\frac{m_2 \alpha\beta}{p_2}}{\alpha\left(1-\frac\beta q+\frac{\beta m_1}{p_1}\right)}\\
&=&\frac 1\alpha-\frac{m_1}{p_1}\times \frac{1-\frac q\alpha-\frac{m_2}{p_2 q}}{1-\frac q\beta -\frac{m_1}{p_1 q}}.
\end{eqnarray*}
We verify now that our applications of H\"older's inequality and Proposition \ref{prop:praciano} were legitimate. It is clear that $s,r\geq 1$. Since $$\frac{s}{t}={1-\frac{m_1\alpha}{p_1}}$$
we also have $1\leq s\leq t$. In particular, our application of Proposition \ref{prop:praciano} to $\prod_{k=1}^{m_2}\ell_\omega$ was possible. Finally, our first application of this proposition requires that $\rho>0$, namely
$$\frac{\alpha\beta m_1m_2}{p_1p_2}\leq 1-\frac\beta q+\frac{\alpha\beta m_1}{p_1q}
\iff \frac{\alpha m_1}{p_1}\left(\frac{\beta m_2}{p_2}-\frac \beta q\right)\leq 1-\frac\beta q.$$
It is easy to check that this last inequality is satisfied provided $\alpha m_1\leq p_1$, $\beta m_2\leq p_2$ and $\beta\leq q$.
\end{proof}

The following proposition is the main step towards the proof of our main results. It is an $n$-linear version of the previous lemma.
\begin{proposition}\label{prop:main}
Let $\mathbf q\in [1,+\infty)^m$. Let $(C_1,\dots,C_n)$ be a partition of $\{1,\dots,m\}$ into non-empty open subsets and let us assume that there exists $\mathbf p\in[1,+\infty)^n$ such that, for any $l\in\{1,\dots,n\}$ and any $k\in C_l$, $q_k=p_l$. Let also $(a(\bi))_{\bi\in\mathbb N^m}$ be a sequence of non-negative real numbers. Assume that there exist $\kappa>0$, $0< r_1,\dots,r_n\leq q$ such that
for all $k\in\{1,\dots,n\}$, for all sequence $v\in \prod_{l\neq k} \prod_{j\in C_l}B_{\ell_{p_l}}$,
$$\sum_{\bi\in\mathbb N^{C_k}}\left(\sum_{\bj\in \mathbb N^{\overline{C_k}}}v_\bj^q a_{\bi,\bj}^q\right)^{\frac{r_k}q}\leq \kappa^{r_k}.$$
Define, for all $k\neq l$,
\begin{eqnarray*}
\frac1{\gamma_k}&=&\frac{1}{r_k}-\sum_{j\neq k}\frac{|C_j|}{p_j}\left(\frac{1-\frac{q}{r_k}-\frac{|C_k|q}{p_k}}{1-\frac{q}{r_j}-\frac{|C_j|q}{p_j}}\right)\\
\nonumber
\frac1{\gamma_{k,l}}&=&\frac{1}{r_k}-\sum_{j\neq k,l}\frac{|C_j|}{p_j}\left(\frac{1-\frac{q}{r_k}-\frac{|C_k|q}{p_k}}{1-\frac{q}{r_j}-\frac{|C_j|q}{p_j}}\right)
\end{eqnarray*}
Then, for all $k\in\{1,\dots,n\}$, 
\begin{equation}\label{eq:mainprop1}
\left(\sum_{\bi\in\mathbb N^{C_k}}\left(\sum_{\bj\in\mathbb N^{\overline{C_k}}} a_{\bi,\bj}^q\right)^{\frac{\gamma_k}q}\right)^{\frac 1{\gamma_k}}\leq \kappa
\end{equation}
provided, for all $k\neq l$, $\gamma_k>0$, $\gamma_{k,l}\in (0,q]$ and $\frac{|C_l|\gamma_{k,l}}{p_l}\leq 1$.
\end{proposition}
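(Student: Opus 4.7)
My plan is to prove Proposition \ref{prop:main} by extending the blueprint of the proof of Lemma \ref{lem:hardylittlewood} from two index groups to $n$ index groups: where the proof of the lemma performs a single H\"older step on the $\bj$-side, I would perform $n-1$ consecutive H\"older steps on $\bj$, one per block $C_l$ with $l\neq k$.

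Fix $k$ and set $S_\bi := (\sum_{\bj\in\mathbb N^{\overline{C_k}}} a_{\bi,\bj}^q)^{1/q}$ for $\bi\in\mathbb N^{C_k}$, so that the goal (\ref{eq:mainprop1}) becomes $(\sum_\bi S_\bi^{\gamma_k})^{1/\gamma_k}\leq \kappa$. As in the proof of Lemma \ref{lem:hardylittlewood}, I first use Proposition \ref{prop:praciano} (applied to the $|C_k|$-linear form $A(e_\bi)=S_\bi^{\gamma_k/\rho}$ on $\prod_{j\in C_k}\ell_\theta$, for $\theta$ chosen so that $1/\rho = 1 - |C_k|/\theta$) to reduce the problem to bounding $\sum_\bi u_\bi S_\bi^{\gamma_k/\rho}$ uniformly in $u\in\prod_{j\in C_k} B_{\ell_\theta}$. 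Then, after writing $S_\bi^q = \sum_\bj a_{\bi,\bj}^q$, exchanging summations and isolating $u_\bi/S_\bi^{q-\gamma_k/\rho}$, I would apply H\"older's inequality iteratively: a first split on $\bi$ peels off the weight $u$; then $n-1$ subsequent steps process the blocks $C_l$ one at a time, each step producing a factor that, after a suitable specialisation of the weight $v$ (delta-weights on still-unprocessed blocks), matches the hypothesis of the proposition for the corresponding $l$ and contributes the term $\frac{|C_l|}{p_l}\cdot\frac{1-q/r_k - |C_k|q/p_k}{1 - q/r_l - |C_l|q/p_l}$ to $1/\gamma_k$.

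The main obstacle is the bookkeeping of exponents at each of the $n-1$ H\"older steps. Each step fixes one auxiliary exponent so that the running working exponent lands on a prescribed partial sum of the series defining $1/\gamma_k$; the quantity $\gamma_{k,l}$ is exactly the working exponent encountered at the step that processes the block $C_l$ (i.e., after all other blocks have been dealt with), which is why it governs the admissibility hypothesis at that stage. The assumptions $\gamma_k>0$, $\gamma_{k,l}\in(0,q]$ and $|C_l|\gamma_{k,l}/p_l\leq 1$ are exactly what is needed to ensure that every H\"older exponent is $\geq 1$ and that the final use of Proposition \ref{prop:praciano}, meant to absorb the factor containing $u$, is legitimate. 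Completing the proof then reduces to an explicit (but tedious) verification that the resulting system of exponent identities closes up, in the same spirit as the closing computation of the proof of Lemma \ref{lem:hardylittlewood}.
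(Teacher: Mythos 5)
Your plan is not the paper's argument (the paper proves the proposition by induction on $n$, freezing a weight on one block and reducing to the two-block Lemma \ref{lem:hardylittlewood}), and as described it has a genuine gap at its central step. To invoke the hypothesis attached to a block $C_l$ you must supply weights in $\prod_{j\in C_s}B_{\ell_{p_s}}$ on \emph{every} other block, including the ones you have not yet processed; your proposed fix --- specialising those weights to ``delta-weights'' --- destroys exactly the information you need. With a standard basis vector as weight on an unprocessed block $C_j$, the hypothesis only gives a bound that is uniform in the chosen index of $\mathbb N^{C_j}$, i.e.\ it controls a supremum over that block, whereas the factor produced by your H\"older step still carries a sum (an $\ell_q$-type aggregation) over $\mathbb N^{C_j}$, and over an infinite index set no such sum can be recovered from a sup. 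The correct mechanism, which is what the paper does, is to keep an \emph{arbitrary} weight $w\in\prod_{j\in C_l}B_{\ell_{p_l}}$ frozen on one remaining block, prove the intermediate mixed-norm estimate uniformly in $w$ (this is where the exponents $\gamma_{k,l}$ and $\gamma_{l,k}$ appear, as the $(n-1)$-block exponents), and only then feed this uniform family of estimates, as the two hypotheses with $\alpha=\gamma_{k,l}$, $\beta=\gamma_{l,k}$, into Lemma \ref{lem:hardylittlewood}. That is precisely an induction on the number of blocks, not a one-pass chain of $n-1$ H\"older steps.

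Beyond this, the part you defer as ``tedious verification'' is in fact the substance of the proof and is not obviously true in your scheme. First, one must check the algebraic identity showing that Lemma \ref{lem:hardylittlewood} applied with $(\alpha,\beta)=(\gamma_{k,l},\gamma_{l,k})$ returns exactly the claimed $\gamma_k$; the paper does this by factoring $1-\frac q{\gamma_{k,l}}-\frac{|C_k|q}{p_k}$ as $\bigl(1-\frac q{r_k}-\frac{|C_k|q}{p_k}\bigr)\bigl(1-\sum_{j\neq k,l}\frac q{1-\frac q{r_j}-\frac{|C_j|q}{p_j}}\bigr)$ together with the symmetric identity, and nothing in your outline produces this. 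Second, your claim that the stated conditions ($\gamma_k>0$, $\gamma_{k,l}\in(0,q]$, $|C_l|\gamma_{k,l}/p_l\leq 1$) legitimise \emph{every} intermediate H\"older/Proposition \ref{prop:praciano} application is unsubstantiated: the working exponents at intermediate stages of a block-by-block scheme are quantities of the type $\gamma_{k,J}$ with $|J|\geq 2$ (partial sums of the series defining $1/\gamma_k$), whose admissibility is not among the hypotheses and would have to be addressed explicitly. So the proposal, as it stands, is a programme whose key steps either fail (the delta-weight reduction) or are missing (the exponent identities and admissibility checks), and the workable repair is essentially the paper's inductive argument.
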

\begin{proof}
The proof is done by induction on $n$. For $n=1$, there is nothing to prove (the inner sum does not appear) and the case $n=2$ is the content of Lemma \ref{lem:hardylittlewood}. So, let us assume that the result is true for $n-1\geq 2$ and let us prove it for $n$. We fix some $l\in\{1,\dots,n\}$
and some $w\in\prod_{j\in C_l} B_{\ell_{p_l}}$. We then  define, for $\bi\in\mathbb N^{\overline{C_l}}$, 
$$b_l(\bi)=\left(\sum_{\bj\in\mathbb N^{C_l}}w_{\bj}^qa_{\bi,\bj}^q\right)^{\frac 1q}.$$
Our assumption implies that, for $k\neq l$,
$$\sum_{\bi\in\mathbb N^{C_k}}\left(\sum_{\bj\in\mathbb N^{\overline{C_k\cup C_l}}} v_\bj^q b_l(\bi,\bj)^q\right)^{\frac{r_k}q}\leq \kappa^{r_k}$$
where $v$ is any element of $\prod_{s\neq k,l}\prod_{j\in C_s} B_{\ell_{p_s}}$. We may thus apply the induction hypothesis to get that, for any $k\neq l$
$$\sum_{\bi\in\mathbb N^{C_k}}\left(\sum_{\bj\in\mathbb N^{\overline{C_k\cup C_l}}}b_l(\bi,\bj)^q\right)^{\gamma_{k,l}}\leq \kappa^{\gamma_{k,l}}.$$
We then set, for $\bi\in\mathbb N^{C_k}$ and $\bj\in \mathbb N^{C_l}$, 
$$c_{k,l}(\bi,\bj)=\left(\sum_{\bk\in\mathbb N^{\overline{C_k\cup C_l}}} a_{\bi,\bj,\bk}^q\right)^{\frac 1q}$$
so that our inequality becomes
$$\sum_{\bi\in\mathbb N^{C_k}}\left(\sum_{\bj\in\mathbb N^{C_l}} w_\bj^q c_{k,l}(\bi,\bj)^q\right)^{\frac{\gamma_{k,l}}q}\leq \kappa^{\gamma_{k,l}}$$
which is satisfied for all $w\in\prod_{j\in C_l} B_{\ell_{p_l}}$. But of course, we can exchange the role played by $k$ and $l$ and we also have
$$\sum_{\bj\in\mathbb N^{C_l}}\left(\sum_{\bi\in\mathbb N^{C_k}} w_\bi^q c_{k,l}(\bi,\bj)^q\right)^{\frac{\gamma_{l,k}}q}\leq \kappa^{\gamma_{l,k}}
$$
for all $w\in\prod_{j\in C_k} B_{\ell_{p_k}}$. We now apply Lemma \ref{lem:hardylittlewood} to find that \eqref{eq:mainprop1} is satisfied with 
$$\frac 1{\gamma_k}=\frac{1}{\gamma_{k,l}}-\frac{|C_l|}{p_l}\left(\frac{1-\frac{q}{\gamma_{k,l}}-\frac{|C_k|q}{p_k}}{1-\frac q{\gamma_{l,k}}-\frac{|C_l| q}{p_l}}\right).$$
It remains to verify that this is the expected value of $\gamma_k$. This follows from
\begin{eqnarray*}
1-\frac q{\gamma_{k,l}}-\frac{|C_k| q}{p_k}&=&1-\frac{q}{r_k}-q\sum_{j\neq k,l}\frac{|C_j|}{p_j}
\left(\frac{1-\frac{q}{r_k}-\frac{|C_k|q}{p_k}}{1-\frac{q}{r_j}-\frac{|C_j|q}{p_j}}\right)-\frac{|C_k| q}{p_k}\\
&=&\left(1-\frac{q}{r_k}-\frac{|C_k| q}{p_k}\right)\left(1-\sum_{j\neq k,l}\frac q{1-\frac{q}{r_j}-\frac{|C_j| q}{p_j}}\right)
\end{eqnarray*}
and from the symmetric computation involving $\gamma_{l,k}$.
\end{proof}

\subsection{A mixed-norm inequality}

We finally need a last result which is a combination of a mixed-norm H\"older inequality (see \cite{BenPan61}) and an inequality due to Blei (see \cite{Bl79}). It appears in \cite{PopSinna13}. Let $(M_j,\mu_j)$ be $\sigma$-finite measure spaces for $j=1,\dots,n$ and introduce the product measure spaces $(M^n,\mu^n)$ and $(M_k^n,\mu_j^n)$ by
$$M^n=\prod_{k=1}^n M_k,\ \mu^n=\prod_{k=1}^n \mu_k,\ M_j^n=\prod_{\substack{k=1\\ k\neq j}}^n M_k,\ 
\mu_j^n=\prod_{\substack{k=1 \\ k\neq j}}^n \mu_k.$$
\begin{lemma}\label{lem:popa}
Let $q>0$, $n\geq 2$ and $r_1,\dots,r_n\in (0,q)$. If $h\geq 0$ is $\mu^n$-measurable, then
$$\left(\int_{M_n}h^Q d\mu^n\right)^{\frac 1Q}\leq \prod_{j=1}^n \left(\int_{M_j}\left(\int_{M_j^n}h^q d\mu_j^n\right)^{\frac{r_j}q}d\mu_j\right)^{\frac{1}{R(q-r_j)}}$$
where $R=\sum_{j=1}^n \frac{r_j}{q-r_j}$ and $Q=\frac{qR}{1+R}$.
\end{lemma}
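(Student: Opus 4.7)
My plan is to prove the inequality by induction on $n$, at each step combining a two-variable Blei-type estimate with a mixed-norm H\"older inequality that peels off one coordinate. The case $n=1$ is immediate: with a single variable one has $R=r_1/(q-r_1)$, hence $Q=qR/(1+R)=r_1$, and the statement reduces to an equality since the inner integral over $M_1^n=\emptyset$ is absent.

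For the base case $n=2$, the claim is essentially Blei's classical bilinear inequality adapted from the usual $q=2$ to a general $q$. I would prove it by a carefully chosen application of H\"older's inequality in the variable $x_1$ followed by another H\"older in $x_2$, with exponents tuned so that (a)~each inner integrand becomes an $L^q$-norm in the remaining variable and (b)~the two outer exponents match the prescribed $r_1$ and $r_2$. Writing $F_j(x_j)=(\int h^q\,d\mu_{3-j})^{1/q}$, the two factors $\int F_1^{r_1}\,d\mu_1$ and $\int F_2^{r_2}\,d\mu_2$ appear exactly when $R=r_1/(q-r_1)+r_2/(q-r_2)$ and $1/Q=1/q+1/(qR)$; these identities are precisely what makes the system of H\"older exponents consistent.

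For the inductive step, given $h$ on $\prod_{k=1}^{n}M_k$ and assuming the inequality in dimension $n-1$, I would fix $x_n\in M_n$ and apply the inductive hypothesis to $h(\cdot,x_n)$ in the first $n-1$ variables, with parameters $r_1,\dots,r_{n-1}$ (and the associated $R'=\sum_{j<n}r_j/(q-r_j)$, $Q'=qR'/(1+R')$). This yields a pointwise-in-$x_n$ bound on the $(n-1)$-dimensional $L^{Q'}$-norm of $h(\cdot,x_n)$ by a product of $n-1$ single-variable integrals, each still depending on $x_n$. Raising the inequality to a suitable power, integrating in $x_n$ against $d\mu_n$, and applying the bivariate base case -- with $x_n$ on one side and $(x_1,\dots,x_{n-1})$ on the other -- produces the missing $j=n$ factor and assembles the global product with the correct exponent $1/Q$ and the correct $R=\sum_{j=1}^n r_j/(q-r_j)$.

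The main obstacle is the exponent arithmetic. Each H\"older application introduces a pair of conjugate exponents, and one has to verify at every step that these can be chosen so that each inner integrand becomes exactly an $L^q$-norm and each outer exponent is precisely the prescribed $r_j$. The identities $R=\sum r_j/(q-r_j)$ and $Q=qR/(1+R)$ are exactly the conditions that make this linear system in the H\"older exponents solvable, and checking that the induction closes up correctly with these choices is the only real technical point. I do not anticipate any deep conceptual difficulty beyond this bookkeeping.
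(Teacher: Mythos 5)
Your overall plan (induction on $n$ from a two--variable Blei-type inequality) is a legitimate route, and it is \emph{not} the route of the paper: the paper does not prove Lemma \ref{lem:popa} at all but quotes it from \cite{PopSinna13}, where it is obtained by combining the Benedek--Panzone mixed-norm H\"older inequality with Blei's inequality (an interpolation of mixed norms with weights $\theta_j=\frac{r_j}{R(q-r_j)}$, plus Minkowski interchanges to reorder the integrations). However, your inductive step has a genuine gap in the way the pieces are assembled. Applying the inductive hypothesis to $h(\cdot,x_n)$ at \emph{fixed} $x_n$ bounds $\bigl(\int_{M^{n-1}}h(\cdot,x_n)^{Q'}d\mu^{n-1}\bigr)^{1/Q'}$ by $\prod_{j<n}A_j(x_n)^{1/(R'(q-r_j))}$, where the inner integral in $A_j(x_n)$ runs over the variables other than $x_j$ and $x_n$, so it does \emph{not} contain the $\mu_n$-integration; in the target inequality the $j$-th factor integrates $h^q$ over all variables except $x_j$, \emph{including} $x_n$. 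To convert $\int_{M_n}\prod_j A_j(x_n)^{\cdots}d\mu_n$ into the desired factors you would have to push the $d\mu_n$ integral inside the outer $\mu_j$-integral and inside the power $r_j/q<1$, and there the integral Minkowski inequality is reversed (for exponents below $1$ it gives a lower bound, not an upper bound). Moreover $Q>Q'$ and the measures are only $\sigma$-finite, so the pointwise control of the $L^{Q'}(M^{n-1})$-norm does not control $\int_{M^{n-1}}h^{Q}d\mu^{n-1}$ at fixed $x_n$. So ``raise to a power, integrate in $x_n$, then apply the bivariate case'' does not close as stated.

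The induction does close if you reverse the order of the two steps. First apply your bivariate case directly to $h$ with the block $(x_1,\dots,x_{n-1})$ as one variable with exponent $\rho_1=Q'=\frac{qR'}{1+R'}$ and $x_n$ with exponent $\rho_2=r_n$: since $\frac{Q'}{q-Q'}=R'$, the associated parameter is $R'+\frac{r_n}{q-r_n}=R$ and the left-hand exponent is exactly $Q$. This produces the $j=n$ factor verbatim, together with $\bigl(\int_{M^{n-1}}g^{Q'}d\mu^{n-1}\bigr)^{\frac{1}{R(q-Q')}}$ where $g=\bigl(\int_{M_n}h^q d\mu_n\bigr)^{1/q}$. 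Now apply the induction hypothesis to $g$ (not to $h(\cdot,x_n)$): because $\int_{M_j^{n-1}}g^q\,d\mu_j^{n-1}=\int_{M_j^n}h^q\,d\mu_j^n$, its factors are exactly the desired $j<n$ factors, and the exponents match because $\frac{Q'}{R(q-Q')}=\frac{R'}{R}$, which turns $\frac{1}{R'(q-r_j)}$ into $\frac{1}{R(q-r_j)}$. Finally, note that your base case is only asserted, and you need it for arbitrary exponents in $(0,q)$ (in particular for $\rho_1=Q'$, which differs from the $r_j$); written out, it is the interpolation $\|h\|_{L^Q(\mu^2)}\le N_1^{\theta_1}N_2^{\theta_2}$ with $\theta_j=\frac{r_j}{R(q-r_j)}$, $N_j$ the mixed norm with exponent $r_j$ outside and $q$ inside, proved by coordinatewise H\"older in each variable plus one Minkowski interchange (legitimate since $r_j\le q$) to reverse the order of integration in one of the two factors -- two H\"older applications alone do not suffice, because the two factors carry opposite orders of integration. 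Carried out for all $n$ coordinates at once, this H\"older-plus-Minkowski argument in fact gives the lemma directly, which is essentially the proof in \cite{PopSinna13}.
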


\section{Proof of the main results}
\begin{proof}[Proof of Theorem \ref{thm:main1}]
Let, for $1\leq j\leq m$,  $x(j)=(x_i(j))_{i\in\mathbb N}\subset X_j^\mathbb N$ with $w_{q_j}(x(j))\leq 1$. We set $a_\bi=\|T(x_\bi)\|$ for $\bi\in\mathbb N^m$ and we intend to show that the 
assumptions of Proposition \ref{prop:main} are satisfied. So, let $k\in\{1,\dots,n\}$. For $l\neq k\in\{1,\dots,n\}$ and $u\in C_l$, we consider a sequence
$v(u)\in B_{\ell_{p_l^*}}=B_{\ell_{q_j^*}}$ and we set $y(u)=(v_i(u) x_i(u))_{i\in\mathbb N}$ so that $w_1(y(u))\leq 1$. Writing $\overline{C_k}=\{u_1,\dots,u_s\}$ and picking $\bj\in \mathbb N^{\overline{C_k}}$, 
we set $y_\bj=y_\bj(\overline{C_k})=(y_{j_1}(u_1),\dots,y_{j_s}(u_s))$, so that
$$\sum_{\bi\in\mathbb N^{C_k}}\left(\sum_{\bj\in\mathbb N^{\overline{C_k}}} v_\bj^q a_{\bi,\bj}^q\right)^{\frac{r_k}q}=\sum_{\bi\in\mathbb N^{C_k}}\left(\sum_{\bj\in\mathbb N^{\overline{C_k}}}\|T(x_\bi(C_k),y_\bj(\overline{C_k}))\|^q\right)^{\frac{r_k}q}.$$
Since $Y$ has cotype $q$, and using Kahane's inequalities, there exists a constant $A_k$ (depending only on $r_k$, on $|\overline{C_k}|$ and on the cotype $q$ constant of $Y$) such that
$$\sum_{\bi\in\mathbb N^{C_k}}\left(\sum_{\bj\in\mathbb N^{\overline{C_k}}} v_\bj^q a_{\bi,\bj}^q\right)^{\frac{r_k}q}\leq A_k\int_\Omega\sum_{\bi\in\mathbb N^{C_k}} \|T(x_{\bi}(C_k),y(\omega))\|^{r_k}d\mathbb P(\omega)$$
where $y(\omega)=\left(\sum_{i=1}^{+\infty}\veps_{j,i}(\omega)y_i(j)\right)_{j\in\overline{C_k}}$
and $(\veps_{j,i})_{j\in\overline{C_k},\ i\in\mathbb N}$ are sequences of independent Bernoulli variables on the same probability space $(\Omega,\mathcal A,\mathbb P)$. Recall that $|\veps_{j,i}(\omega)|\leq 1$, for any $j\in\overline{C_k}$ and any $i\in\mathbb N$. Therefore, 
\begin{eqnarray*}
\left\|y(j,\omega)\right
\|_{X_j}&=&\sup_{x^*\in B_{X^*_j}}\left|\left\langle x^*,\sum_{i=1}^{+\infty}\veps_{j,i}(\omega)y_i(j)\right\rangle\right|\\
&\leq&w_1(y(j))\leq 1.
\end{eqnarray*}
Since $T$ is coordinatewise multiple summing in the coordinates of $C_k$, this yields 
$$\sum_{\bi\in\mathbb N^{C_k}}\left(\sum_{\bj\in\mathbb N^{\overline{C_k}}}v_\bj^q a_{\bi,\bj}^q\right)^{\frac{r_k}q}\leq A_k^{r_k} \|T^{C_k}\|_{CW(r_k,p_k)}^{r_k}.$$
Setting $\kappa=\max_k A_k \|T^{C_k}\|_{CW(r_k,p_k)}$, we may apply Proposition \ref{prop:main} which yields, for any $k\in\{1,\dots,m\}$, 
$$\left(\sum_{\bi\in\mathbb N^{C_k}}\left(\sum_{\bj\in\mathbb N^{\overline{C_k}}}\|T(x_\bi(C_k),x_\bj(\overline{C_k})\|^q\right)^{\frac{\gamma_k}q}\right)^{\frac{1}{\gamma_k}}\leq \kappa.$$
We conclude by Lemma \ref{lem:popa}.
\end{proof}

\begin{remark}
We have $A_k\leq \left(C_q(Y)K_{r_k,q}\right)^{|\overline{C_k}|}$ where $C_q(Y)$ is the cotype $q$ constant of $Y$ and $K_{r_k,q}$ is the constant appearing in Kahane's inequality between the $L^{r_k}$ and the $L^q$-norms. Hence, we have shown that 
$$\pi_{r,\mathbf q}^{\mathrm{mult}}(T)\leq \sup_{k=1,\dots,m}\left\{\left(C_q(Y)K_{r_k,q}\right)^{|\overline{C_k}|}\|T^{C_k}\|_{CW(r_k,p_k)}\right\}.$$
\end{remark}

\smallskip

The forthcoming lemma will be uselful for $(r,\mathbf p)$-multiple summing maps with $r$ greater than the cotype of the target space. It is inspired by the proof
of Theorem 1.2 of \cite{DiSP14}.

\begin{lemma}\label{lem:displike}
 Let $T:X_1\times\cdots\times X_m\to Y$ be $m$-linear with $Y$ a cotype $q$ space. 
 Let $\mathbf q\in [1,+\infty)^m$ and $C\subset \{1,\dots,m\}$. We define $\mathbf t\in [1,+\infty)^C$ by
 $t_k=q_k$ for all $k\in C$. Let finally $s,r\in [1,+\infty)$ satisfying
 $$\frac1r=\frac 1s+\sum_{j\in \bar C}\frac{1}{q_j^*},$$
 $r\geq q$ and $s\geq q_k$ for all $k\in\{1,\dots,m\}$. Then there exists $\kappa>0$ such that
 $$\pi_{s,\mathbf q}^{\mathrm{mult}}(T)\leq \kappa \sup\left\{\pi_{r,\mathbf t}^{\mathrm{mult}}\big(T^{C}(z)\big);\ \|z\|_{X^{\bar C}}\leq 1\right\}.$$
\end{lemma}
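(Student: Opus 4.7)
The plan is to bootstrap the coordinatewise summability on $C$ into a genuine multiple summing property in two steps. Set $\pi := \sup\{\pi_{r,\mathbf{t}}^{\mathrm{mult}}(T^C(z)) : \|z\|_{X^{\bar C}}\leq 1\}$ and introduce the hybrid exponent $\mathbf{q}' \in [1,+\infty)^m$ defined by $q'_k = q_k$ for $k \in C$ and $q'_j = 1$ for $j \in \bar C$. The idea is first to show that $T$ is multiple $(r, \mathbf{q}')$-summing with constant controlled by $\pi$ (an argument in the spirit of Defant--Popa--Schwarting), then to apply Theorem~\ref{thm:inclusion} to trade the $w_1$-assumption in the $\bar C$-coordinates for the $w_{q_j}$-assumption at the price of the prescribed exponent shift from $r$ to $s$.

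For the first step, fix sequences $x^{(k)} \in X_k^{\mathbb{N}}$ with $w_{q'_k}(x^{(k)}) \leq 1$ for every $k$. For $j \in \bar C$ set
$$y_j(\omega) := \sum_{i} \veps_{j,i}(\omega)\, x_i^{(j)},$$
where the $\veps_{j,i}$ are independent Rademacher variables. Since $w_1(x^{(j)}) \leq 1$, a direct duality argument yields $\|y_j(\omega)\|_{X_j} \leq w_1(x^{(j)}) \leq 1$ for every realization $\omega$, whence $\|y(\omega)\|_{X^{\bar C}}\leq 1$. Iterating the cotype of $Y$ across the $\bar C$-coordinates and invoking Kahane's inequality gives, for each $\bj \in \mathbb{N}^C$,
$$\left(\sum_{\bk \in \mathbb{N}^{\bar C}} \|T(x_\bj, x_\bk)\|^q\right)^{1/q} \;\leq\; K_1 \,\mathbb{E}\,\|T(x_\bj, y(\omega))\|,$$
with $K_1$ depending only on $C_q(Y)$, $|\bar C|$ and a Kahane constant. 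Since $\|y(\omega)\|_{X^{\bar C}}\leq 1$, the definition of $\pi$ together with multilinear homogeneity in the $\bar C$-factor gives $\pi_{r,\mathbf{t}}^{\mathrm{mult}}(T^C(y(\omega)))\leq \pi$, and the hypothesis $w_{q_k}(x^{(k)})\leq 1$ on $C$ yields the pointwise bound $(\sum_\bj \|T(x_\bj, y(\omega))\|^r)^{1/r} \leq \pi$ for every $\omega$. Because $r \geq q$, the $\ell^r$-norm is dominated by the $\ell^q$-norm on sequences; raising the cotype inequality to the $r$-th power, moving $\mathbb{E}$ inside via Jensen ($r\geq 1$), summing over $\bj$ and inserting the pointwise bound yields
$$\sum_{\bi \in \mathbb{N}^m} \|T(x_\bi)\|^r \;\leq\; K_1^r\, \pi^r,$$
which proves $\pi_{r,\mathbf{q}'}^{\mathrm{mult}}(T) \leq K_1 \pi$.

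For the second step I apply Theorem~\ref{thm:inclusion} to $T$ with starting exponents $(r, \mathbf{q}')$ and target $(s, \mathbf{q})$. The monotonicity hypothesis $q_k\geq q'_k$ is trivial (equality on $C$, $q_j\geq 1 = q'_j$ on $\bar C$), and a direct computation gives
$$\frac{1}{r} - \sum_k \frac{1}{q'_k} + \sum_k \frac{1}{q_k} \;=\; \frac{1}{r} - \sum_{j \in \bar C}\Bigl(1 - \frac{1}{q_j}\Bigr) \;=\; \frac{1}{r} - \sum_{j \in \bar C}\frac{1}{q_j^*} \;=\; \frac{1}{s},$$
which is positive by hypothesis; moreover the formula for the target exponent in the inclusion theorem produces precisely the value of $s$ given in the statement of the lemma. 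Theorem~\ref{thm:inclusion} therefore applies and yields $\pi_{s,\mathbf{q}}^{\mathrm{mult}}(T) \leq K_2\, \pi_{r,\mathbf{q}'}^{\mathrm{mult}}(T) \leq K_1 K_2\, \pi$, which establishes the lemma with $\kappa := K_1 K_2$. The main obstacle is the first step: the Rademacher sums $y_j(\omega)$ must stay uniformly bounded in norm so that $T^C(y(\omega))$ falls in the range where the hypothesis on $\pi$ is usable, and this forces the use of $w_1$ (rather than the weaker $w_{q_j}$) in the $\bar C$-coordinates; the inclusion theorem is then the device that exchanges this restriction for the exponent shift dictated by $\frac{1}{r} = \frac{1}{s} + \sum_{j \in \bar C} \frac{1}{q_j^*}$.
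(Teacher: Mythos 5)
Your argument is correct, but it follows a genuinely different route from the paper. You first establish a Defant--Popa--Schwarting-type intermediate statement: uniform multiple $(r,\mathbf t)$-summability of the sections $T^C(z)$ plus cotype $q\le r$ of $Y$ gives multiple $(r,\mathbf q')$-summability of $T$, where $\mathbf q'$ keeps $q_k$ on $C$ but takes the exponent $1$ on $\bar C$; this is done by the Rademacher/cotype iteration in the $\bar C$-coordinates (the same Kahane-plus-cotype device the paper uses in the proof of Theorem \ref{thm:main1}), with the uniform bound $\|y_j(\omega)\|\le w_1(x^{(j)})\le 1$ making the sectional hypothesis usable pointwise in $\omega$, and your bookkeeping ($\ell^r\subset\ell^q$ monotonicity, Jensen, Fubini) is sound. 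You then invoke Theorem \ref{thm:inclusion} with $(r,\mathbf q')\to(s,\mathbf q)$; the exponent arithmetic is exactly the relation $\frac1r=\frac1s+\sum_{j\in\bar C}\frac1{q_j^*}$, the positivity hypothesis is $\frac1s>0$, and there is no circularity since the paper's proof of Theorem \ref{thm:inclusion} rests only on Proposition \ref{prop:praciano} (its proof in fact yields $\pi_{s,\mathbf q}^{\mathrm{mult}}(T)\le\pi_{r,\mathbf q'}^{\mathrm{mult}}(T)$, so your $K_2$ may be taken to be $1$). The paper argues without random signs: it regroups $\big(\sum_{\bi}\|T(x_\bi)\|^s\big)^{1/s}$ as an $\ell_s$-norm of the $\ell_s(Y)$-valued blocks indexed by $\mathbb N^{\bar C}$, uses that $\mathrm{id}:\ell_r(Y)\to\ell_s(Y)$ is $(s,\rho)$-summing (cotype $r$ of $\ell_r(Y)$ plus the linear inclusion theorem), then applies Proposition \ref{prop:praciano} and multilinearity to absorb the weighted sums $\sum_\bi v_\bi x_\bi(\bar C)$ into a single vector $z$ of norm at most $1$, landing directly on the sectional hypothesis. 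Your route buys a more modular argument (a DPS-type reduction plus the multilinear inclusion theorem, at the cost of reordering the paper's logical layout and of chaos-level Kahane constants), while the paper's route stays within the linear summing-operator calculus and gives the conclusion in one pass with more explicit control of the constant.
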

If all the $q_k$ are equal to the same $p$, the conclusion takes the more pleasant form: 
$$\pi_{s,q}^{\textrm{mult}}(T)\leq \kappa \|T^C\|_{CW(r,t)},\ \frac1r=\frac 1s+\frac{|\bar C|}{p^*}.$$
Note that we require now coordinatewise summability only in the coordinates of $C$ (and nothing on $\bar C$). 
But now, we start with $(r,\mathbf t)$-summability with $r$ greater than the cotype of the target space.
\begin{proof}
 Let $x$ belong to $\prod_{k=1}^m B_{\ell_{q_k}^w(X_k)}$. We write 
 \begin{equation}\label{eq:displike}
  \left(\sum_{\bi\in\mathbb N^m}\|T(x_\bi)\|^s\right)^{1/s}=\left(\sum_{\bi\in\mathbb N^{\bar C}}\|y_\bi\|_{\ell_s(Y)}^s\right)^{1/s}
 \end{equation}
 where, for a fixed $\bi\in\mathbb N^{\overline C}$,  $y_\bi$ is the sequence $\big(T(x_\bi(\bar C),x_\bj(C)\big)_{\bj\in\mathbb N^C}$. 
 Since $r\geq q$, $\ell_r(Y)$ has cotype $r$ so that $\mathrm{id}:\ell_r(Y)\to\ell_r(Y)$ is $(r,1)$-summing. By the ideal property of summing operators,
 $\mathrm{id}:\ell_r(Y)\to\ell_s(Y)$ is still $(r,1)$-summing. By the inclusion theorem, this last map is $(s,\rho)$-summing, with
 $$\frac 1\rho=1-\frac 1r+\frac 1s=1-\sum_{j\in\bar C}\frac{1}{q_j^*}\in (0,1).$$
 Applying this to \eqref{eq:displike} yields
 $$\left(\sum_{\bi\in\mathbb N^m}\|T(x_\bi)\|^s\right)^{1/s}\leq \kappa \sup_{\varphi\in B_{[\ell_r(Y)]^*}}\left(\sum_{\bi\in\mathbb N^{\bar C}}|\varphi(y_\bi)|^\rho\right)^{1/\rho}.$$
 Observe that the constant $\kappa>0$ does not depend on $T$, but only on $Y$, $r$ and $\mathbf q$. We now apply Proposition \ref{prop:praciano} to get
 \begin{eqnarray*}
  \left(\sum_{\bi\in\mathbb N^m}\|T(x_\bi)\|^s\right)^{1/s}&\leq &\kappa  \sup_{\varphi\in B_{[\ell_r(Y)]^*}} \sup_{v\in\prod_{j\in\bar C}B_{\ell_{q_j}^*}}\sum_{\bi\in\mathbb N^{\bar C}}v_\bi\varphi(y_\bi)\\
  &\leq&\kappa   \sup_{v\in\prod_{j\in\bar C}B_{\ell_{q_j}^*}}\sup_{\varphi\in B_{[\ell_r(Y)]^*}}  \varphi\left(\left(T\left(\sum_{\bi\in\mathbb N^{\bar C}}v_\bi x_\bi(\bar C),x_{\bj}(C)\right)\right)_{\bj\in\mathbb N^C}\right)\\
  &\leq&\kappa\sup_{v\in\prod_{j\in\bar C}B_{\ell_{q_j}^*}}\left(\sum_{\bj\in\mathbb N^{C}}\left\|T\left(\sum_{\bi\in\mathbb N^{\bar C}}v_\bi x_\bi(\bar C),x_{\bj}(C)\right)\right\|^r\right)^{1/r}\\
  &\leq& \kappa \sup_{z\in X^{\bar C},\ \|z\|\leq 1}\left(\sum_{\bj\in\mathbb N^{C}}\|T(z,x_\bj(C))\|^r\right)^{1/r}
\end{eqnarray*}
since, for any $m\in\bar C$, by H\"older's inequality,
$$\left\|\sum_i v_i(m)x_i(m)\right\|=\sup_{x^*\in X_m^*}\sum_i v_i(m)\langle x^*,x_i(m)\rangle\leq 1.$$
\end{proof}

\begin{proof}[Proof of Theorem \ref{thm:main2}]
 We fix $k_0$ and $J$ satisfying the assumptions of the theorem. At the beginning we argue like in the proof of Theorem \ref{thm:main1}. Let $D=\bigcup_{j\in J}C_j$ and $z\in B_{X^D}$.
 We also set $C=\bar D$ and $C'=C\backslash\{k_0\}$. Let, for $j\in C$, $(x_i(j))\in X_j^{\mathbb N}$ with $w_{q_j}(x(j))\leq 1$. We can follow the arguments of the proof of Theorem \ref{thm:main1}
 up to the application of Lemma \ref{lem:popa} for the multilinear map $T^C(z)$.
This gives
$$\left(\sum_{\bi\in\mathbb N^{C_{k_0}}}\left(\sum_{\bj\in\mathbb N^{C'}}\|T(x_\bi(C_{k_0}),x_\bj(C'),z)\|^q\right)^{\frac{\gamma_{k_0,J}}q}\right)^{\frac 1{\gamma_{k_0,J}}}\leq\kappa.$$
Observe that the constant $\kappa$ does not depend on $z\in B_{X^D}$. Since $\gamma_{k_0,J}\geq q$, this implies
$$\left(\sum_{\bi\in\mathbb N^C}\|T(x_\bi(C),z)\|^{\gamma_{k_0,J}}\right)^{\frac1{\gamma_{k_0,J}}}\leq \kappa.$$
We may then apply Lemma \ref{lem:displike} to $T$ with $r=\gamma_{k_0,J}$ and 
$$\frac 1s=\frac 1r-\sum_{j\in D}\frac{1}{q_j^*}=\frac{1}{\gamma_{k_0,J}}-\sum_{j\in J}\frac{|C_j|}{p_j^*}$$
to get the conclusion.
\end{proof}

\begin{proof}[Proof of Theorem \ref{thm:main3}]
 The proof is completely similar but more elementary. Indeed, we can start from
 $$\left(\sum_{\bi\in\mathbb N^{C_k}}\|T(x_\bi(C_k),z)\|^{r_k}\right)^{\frac 1{r_k}}\leq \kappa$$
 for all $z\in\prod_{j\in\overline{C_k}}B_{X_j}$ and apply directly Lemma \ref{lem:displike} since $r_k\geq q$.
\end{proof}

\section{The inclusion theorem}
The proof of Theorem \ref{thm:inclusion} follows rather easily from Proposition \ref{prop:praciano}.
\begin{proof}[Proof of Theorem \ref{thm:inclusion}]
 We start from $x\in\prod_{k=1}^m B_{\ell_{q_k}^w(X_k)}$ and $u\in\prod_{k=1}^m B_{\ell_{\theta_k}}$ where $\frac 1{\theta_k}=\frac 1{p_k}-\frac 1{q_k}$. Then by
 H\"older's inequality, $ux=(u(1)x(1),\dots,u(m)x(m))$ belongs to $\prod_{k=1}^m B_{\ell_{p_k}^w(X_k)}$. Hence,
 $$\left(\sum_{\bi\in\mathbb N^m}|u_\bi|^r\|T(x_\bi)\|^r\right)^{1/r}\leq \pi_{r,p}^{\textrm{mult}}(T).$$
 We may then apply Proposition \ref{prop:praciano} to the multilinear form $A:\ell_{\frac{\theta_1}r}\times\cdots\times\ell_{\frac{\theta_m}r}\to\mathbb C$
 defined by $A(v)=\sum_{i\in\mathbb N^m}v_{\bi}\|T(x_\bi)\|^r$. This is possible since
 $$1-\sum_{j=1}^m \frac{r}{\theta_j}=r\left(\frac 1r-\sum_{j=1}^m \frac{1}{p_j}+\sum_{j=1}^m \frac 1{q_j}\right)>0.$$
 This yields immediately Theorem \ref{thm:inclusion}.
\end{proof}

Of course, it is natural to compare P\'erez-Garc\'ia result with ours. If we start from a $(p,p)$-summing multilinear map, the former is better. But if we start from a multiple $\left(\frac{2m}{m+1},1\right)$-summing
$m$-linear map, Theorem \ref{thm:inclusion} shows that, for any $s\in\left(\frac{2m}{m+1},2\right)$, it is also multiple $\left(s,\frac{2m^2s}{2m+(2m^2-m-1)s}\right)$-summing whereas
we cannot expect from P\'erez-Garc\'ia theorem a better result than it is $(s,s)$-summing. It is easy to check that for those $s$, 
$$\frac{2m^2 s}{2m+(2m^2-m-1)s}<s.$$
In other words, Theorem \ref{thm:inclusion} gives a better conclusion. Applications of Theorem \ref{thm:inclusion} are given in \cite{PSST17}.

\section{Applications to harmonic analysis}

\subsection{Product of $p$-Sidon sets}
\begin{proof}[Proof of Theorem \ref{thm:sidon}]
Let $G=G_1\times\cdots\times G_m$ and $f=\sum_{\bi\in\mathbb N^m}a_\bi \gamma_\bi$ be a polynomial with spectrum in $\Lambda_1\times\cdots\times\Lambda_m$. Here $\gamma_\bi$ denotes the tensor product $\gamma_{i_1}(1)\otimes\cdots\otimes \gamma_{i_m}(m)$ and each $\gamma_{i_j}(j)$ belongs to $\Gamma_j$. Fix $k\in\{1,\dots,m\}$, let $C_k=\{k\}$, $\widehat{G_k}=G_1\times\cdots G_{k-1}\times G_{k+1}\times\cdots\times G_m$ and  $\widehat{\Lambda_k}=\Lambda_1\times\cdots \Lambda_{k-1}\times \Lambda_{k+1}\times\cdots\times \Lambda_m$. It is well-known that the product of $\Lambda(2)$-sets is still a $\Lambda(2)$-set (this follows from Minkowski's inequality for integrals). Hence, $\widehat{\Lambda_k}$ is a $\Lambda(2)$-set and we deduce that for any $i\in\mathbb N=\mathbb N^{C_k}$, 
$$\left(\sum_{\bj\in\mathbb N^{\overline{C_k}}}|a_{i,\bj}|^2\right)^{p_k/2}\leq \kappa 
\int_{\widehat{G_k}}\left|\sum_{\bj\in\mathbb N^{C_k}}a_{i,\bj}\gamma_\bj (g')\right|^{p_k}dg'.$$
We sum over $i\in\mathbb N^{C_k}$ and we use that $\Lambda_k$ is $p_k$-Sidon to deduce that
\begin{eqnarray*}
\left(\sum_{i\in\mathbb N^{C_k}}
\left(
\sum_{\bj\in\mathbb N^{\overline{C_k}}}
|a_{i,\bj}|^2\right)^{p_k/2}
\right)^{1/p_k}
&\leq&\kappa \left(\int_{\widehat{G_k}}\sup_{g\in G_k}|f(g,g')|^{p_k}dg'\right)^{1/p_k}\\
&\leq&\kappa\|f\|_\infty.
\end{eqnarray*}
The result now follows from Lemma \ref{lem:popa}. We postpone the proof of optimality to the last section.
\end{proof}

\subsection{Product of $p$-Rider sets}
Beyond $p$-Sidon sets, L. Rodr\'iguez-Piazza	 has introduced in \cite{RP87} another class of sets extending naturally that of Sidon sets. For $G$ a compact abelian group with dual $\Gamma$, a subset $\Lambda\subset \Gamma$ is called \emph{$p$-Rider} ($1\leq p<2$) if there is a constant $\kappa>0$ such that  each $f\in\mathcal C(G)$ with $\hat f$ supported on $\Lambda$ satisfies
$$\|\hat f\|_{\ell_p}\leq \kappa [\![f]\!]:=\int_{\Omega}\left \|\sum_{\gamma\in\Gamma}\veps_\gamma \hat f(\gamma)\gamma\right\|_\infty d\mathbb P$$
where $(\veps_\gamma)_{\gamma\in\Gamma}$ is a sequence of independent Bernoulli variables. The terminology $p$-Rider comes from Rider's theorem which asserts that $1$-Sidon sets and $1$-Rider sets coincide. Observe that it is easy to prove that a $p$-Sidon set is always a $p$-Rider set (see \cite{LRP03}), but the converse is an open question.

It turns out that $p$-Rider sets are usually easier to manage than $p$-Sidon sets. This is due to the inconditionnality of the norm $[\![\cdot]\!]$. For instance, this last property implies immediately that the union
of two $p$-Rider sets is still a $p$-Rider set, a fact which is unknown for $p$-Sidon sets. This is also the case for the direct product.

\begin{theorem}\label{thm:rider}
Let $G_1,\dots,G_m$, $m\geq 2$, be compact abelian groups with respective dual groups $\Gamma_1,\dots,\Gamma_m$. For $1\leq j\leq m$, let $\Lambda_j\subset\Gamma_j$ be a $p_j$-Rider set. Then $\Lambda_1\times\cdots\times\Lambda_m$ is a $p$-Rider set in $\Gamma_1\times\cdots\times \Gamma_m$ for
$$\frac1p=\frac 12+\frac{1}{2R}\textrm{ and }R=\sum_{k=1}^m \frac{p_k}{2-p_k}.$$
\end{theorem}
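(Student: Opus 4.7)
The plan is to mimic the structure of the proof of Theorem \ref{thm:sidon}: establish the same type of coordinatewise estimate
\begin{equation*}
\left(\sum_{i\in\mathbb N^{C_k}}\Big(\sum_{\bj\in\mathbb N^{\overline{C_k}}}|a_{i,\bj}|^2\Big)^{p_k/2}\right)^{1/p_k}\leq \kappa \, [\![f]\!]
\end{equation*}
for each $k$, and then feed it into Lemma \ref{lem:popa} with $q=2$ and $r_k=p_k$ to obtain the announced $p$-Rider estimate with exponent $p^{-1}=1/2+1/(2R)$. The Sidon proof used the $\Lambda(2)$-property of the product $\widehat{\Lambda_k}$ to pass from $\ell^2$-norms on $\bj$ to $L^{p_k}$-integrals over $\widehat{G_k}$; here, since the randomized norm $[\![\cdot]\!]$ is already unconditional, the $\Lambda(2)$ step is free (via Khinchine), which is exactly why no extra assumption is needed.

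The heart of the matter is therefore the coordinatewise estimate. For a polynomial $f=\sum_{\bi}a_\bi\gamma_\bi$ with spectrum in $\Lambda_1\times\cdots\times\Lambda_m$, I would fix $k$ and $C_k=\{k\}$, fix $g'\in\widehat{G_k}$ and independent Bernoullis $(\veps_{i,\bj})$, and consider
\begin{equation*}
h_{g',\veps}(g)=\sum_{i}\Big(\sum_{\bj}\veps_{i,\bj}a_{i,\bj}\gamma_\bj(g')\Big)\gamma_i(g),
\end{equation*}
whose spectrum lies in $\Lambda_k$. The $p_k$-Rider property of $\Lambda_k$ gives
\begin{equation*}
\sum_i\Big|\sum_\bj \veps_{i,\bj}a_{i,\bj}\gamma_\bj(g')\Big|^{p_k}\leq \kappa^{p_k}[\![h_{g',\veps}]\!]^{p_k}.
\end{equation*}
Integrate in $\veps$. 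On the left, Khinchine's inequality (lower bound) yields
\begin{equation*}
\mathbb E_\veps\Big|\sum_\bj \veps_{i,\bj}a_{i,\bj}\gamma_\bj(g')\Big|^{p_k}\geq c\Big(\sum_\bj |a_{i,\bj}|^2\Big)^{p_k/2},
\end{equation*}
independently of $g'$. On the right, the key (and only delicate) observation is that $[\![h_{g',\veps}]\!]=\mathbb E_\eta\sup_g|\sum_{i,\bj}\eta_i\veps_{i,\bj}a_{i,\bj}\gamma_\bj(g')\gamma_i(g)|$ and, since the products $\eta_i\veps_{i,\bj}$ of independent $\pm 1$ variables form a single family of independent Bernoullis $\tilde\veps_{i,\bj}$, one has
\begin{equation*}
\mathbb E_\veps [\![h_{g',\veps}]\!]=\mathbb E_{\tilde\veps}\sup_{g}\Big|\sum_{\bi}\tilde\veps_\bi a_\bi\gamma_\bi(g,g')\Big|\leq [\![f]\!].
\end{equation*}
Kahane's inequality absorbs the $p_k$-th power into a constant, giving $\mathbb E_\veps[\![h_{g',\veps}]\!]^{p_k}\leq C[\![f]\!]^{p_k}$. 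Combining these two bounds produces the desired coordinatewise inequality.

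Once the $m$ inequalities are in hand, Lemma \ref{lem:popa} applied to $h(\bi)=|a_\bi|$ with $q=2$, $r_k=p_k$, $Q=qR/(1+R)=2p/(2-p)\cdot\ldots$ — i.e.\ with $Q$ arranged so that $1/Q=1/2-(1-1/p)/\ldots$ — recovers exactly $1/p=1/2+1/(2R)$, finishing the argument. The main obstacle (and the only genuinely new input relative to the $p$-Sidon proof) is the collapse $\eta_i\veps_{i,\bj}\leadsto \tilde\veps_{i,\bj}$ that allows the double randomization inherent in applying the Rider property on one coordinate to be re-expressed as a single randomization bounded by $[\![f]\!]$; everything else is a translation of the Sidon scheme with $\Lambda(2)$ replaced by Khinchine.
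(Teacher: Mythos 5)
Your proposal is correct and takes essentially the same route as the paper's proof: a coordinatewise $\ell^{p_k}(\ell^2)$ estimate derived from the $p_k$-Rider property of $\Lambda_k$ together with Khintchine, the collapse of products of independent Bernoulli variables into a single Bernoulli family, and Kahane, followed by Lemma \ref{lem:popa} with $q=2$ and $r_k=p_k$ (where your garbled formula for $Q$ should simply read $Q=2R/(1+R)=p$, since $1/p=1/2+1/(2R)$). The paper organizes the randomization a bit differently, using three Bernoulli families $(\veps_{i,\bj})$, $(\delta_\bj)$, $(\eta_i)$ and the contraction principle rather than your evaluation at a fixed $g'$ and the identification of $\eta_i\veps_{i,\bj}$ with a single Bernoulli family, but this is only a difference in bookkeeping, not in substance.
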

This result was already proved in \cite{RP91} using an arithmetical characterization of $p$-Rider sets. We provide a new (and maybe more elementary) proof using our machinery.
\begin{proof}
Let $G=G_1\times\cdots\times G_m$ and $f=\sum_{\bi\in\mathbb N^m}a_\bi \gamma_\bi$ be a polynomial with spectrum in $\Lambda_1\times\cdots\times\Lambda_m$. Fix $k\in\{1,\dots,m\}$ and keep the notations of the proof of Theorem \ref{thm:sidon}. Let $(\Omega,\mathcal A,\mathbb P)$ be a probability space and consider
three sequences $(\veps_{i,\bj})_{i\in\mathbb N,\bj\in\mathbb N^{\overline{C_k}}}$, $(\delta_\bj)_{\bj\in\mathbb N^{C_k}}$, $(\eta_i)_{i\in\mathbb N}$ of independent Bernoulli variables on $(\Omega,\mathcal A,\mathbb P)$. Then, for any $i\in\mathbb N=\mathbb N^{C_k}$ and any $\omega\in\Omega$, by the Khintchine inequalities, 
\begin{eqnarray*}
\left(\sum_{\bj\in\mathbb N^{\overline{C_k}}}|a_{i,\bj}|^2\right)^{p_k/2}&=&\left(\sum_\bj |a_{i,\bj}\veps_{i,\bj}(\omega)|^2\right)^{p_k/2}\\
&\leq&\kappa_1 \int_{\Omega}\left|\sum_{\bj}a_{i,\bj}\veps_{i,\bj}(\omega)\delta_{\bj}(\omega')\right|^{p_k}d\mathbb P(\omega').
\end{eqnarray*}
We sum over $i$ and use that $\Lambda_k$ is a $p_k$-Rider set to get
\begin{eqnarray*}
&&\sum_{i\in\mathbb N}\left(\sum_{\bj\in\mathbb N^{\overline{C_k}}}|a_{i,\bj}|^2\right)^{p_k/2}\\
&&\leq
\kappa_2 \int_\Omega\left(\int_{\Omega} \sup_{g\in G_k}\left|\sum_{i,\bj}a_{i,\bj}\veps_{i,\bj}(\omega)\delta_{\bj}(\omega')\eta_i(\omega'')\gamma_i(k)(g)\right|d\mathbb P(\omega'')\right)^{p_k}d\mathbb P(\omega')\\
&&\leq\kappa_3 \int_\Omega\int_{\Omega} \sup_{g\in G_k}\left|\sum_{i,\bj}a_{i,\bj}\veps_{i,\bj}(\omega)\delta_{\bj}(\omega')\eta_i(\omega'')\gamma_i(k)(g)\right|^{p_k}d\mathbb P(\omega'')d\mathbb P(\omega')
\end{eqnarray*}
where the last line comes from Kahane's inequalities. We then integrate over $\omega\in\Omega$, exchange integrals, apply the contraction principles to Bernoulli variables (see \cite[Proposition 12.2]{DJT}) and use a last time Kahane's inequality to get
\begin{eqnarray*}
\sum_{i\in\mathbb N}\left(\sum_{\bj\in\mathbb N^{\overline{C_k}}}|a_{i,\bj}|^2\right)^{p_k/2}&\leq&
\kappa_3\int_{\Omega}\sup_{g\in G_k}\left|\sum_{i,\bj}a_{i,\bj}\veps_{i,\bj}(\omega)\gamma_i(k)(g)\right|^{p_k}d\mathbb P(\omega)\\
&\leq&\kappa_4 [\![f]\!]^{p_k}.
\end{eqnarray*}
We conclude using Lemma \ref{lem:popa}.
\end{proof}

\section{About the optimality}
\subsection{Optimality for coordinatewise summability}
We now discuss the optimality of our results. We first show that Theorem \ref{thm:intro} is optimal when we restrict ourselves to cotype 2 spaces and $1\leq p\leq 2$.
\begin{theorem}\label{thm:opti1}
Let $p\in [1,2]$, $r\geq p$ satisfying $\frac 1r\geq\frac 1p-\frac 12$ and $m\geq 1$. Then the optimal $s$ such that every $m$-linear map $T:X_1\times\cdots\times X_m\to\ell_2$ which is separately $(r,p)$-summing is automatically $(s,p)$-summing satisfies
\begin{itemize}
\item $\frac 1s=\frac{m-1}{2m}+\frac1{mr}-\frac {m-1}{p^*}$ provided $\frac 1r-\frac{m-1}{p^*}>\frac 12$;
\item $\frac 1s=\frac 1r-\frac{m-1}{p^*}$ provided $0<\frac 1r-\frac{m-1}{p^*}\leq\frac 12$.
\end{itemize}
\end{theorem}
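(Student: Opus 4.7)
The achievability of the stated value $s$ follows directly from Theorem~\ref{thm:intro} (with $t=p$ and $q=2$), so the content of Theorem~\ref{thm:opti1} is the \emph{optimality}: no smaller exponent $s'<s$ can work. My plan is to construct, for each integer $N$, an explicit $m$-linear map $T_N:(\ell_{p^*}^N)^m\to\ell_2$ (or scalar-valued, hence trivially $\ell_2$-valued) whose separately $(r,p)$-summing norm can be bounded by an explicit power of $N$, and then to derive a lower bound on $\pi_{s',p}^{\mathrm{mult}}(T_N)$ by evaluating on the canonical basis of $\ell_{p^*}^N$. Coupling the two through the implication ``separately $(r,p)$-summing $\Rightarrow$ multiple $(s,p)$-summing'' (with a constant uniform in $T$, as made explicit in the remark following the proof of Theorem~\ref{thm:main1}) forces the exponent from below; letting $N\to\infty$ then shows that no $s'<s$ can work.

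For the first regime, $\frac{1}{r}-\frac{m-1}{p^*}>\frac{1}{2}$, I would use a Kahane--Salem--Zygmund random polynomial
$$T_N\bigl(x(1),\dots,x(m)\bigr)=\sum_{\mathbf{i}\in [N]^m}\veps_{\mathbf{i}}\prod_{j=1}^{m}x_{i_j}(j),$$
with signs $\veps_{\mathbf{i}}\in\{\pm1\}$ witnessing the sub-Gaussian KSZ estimate. A version of KSZ adapted to the $\ell_{p^*}^N$-geometry bounds $\|T_N\|_{(\ell_{p^*}^N)^m\to\mathbb{C}}$ by $N^\alpha\sqrt{\log N}$ for an explicit $\alpha=\alpha(p,m)$. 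Next, the linear form obtained by fixing $m-1$ coordinates is automatically $(r,p)$-summing for any $r\ge p$, with its $(r,p)$-summing norm controlled by its operator norm: this is the linear inclusion theorem starting from the trivial fact that every linear form is $(1,1)$-summing with $\pi_{1,1}$ equal to the operator norm. Hence $\|T_N^{\{k\}}\|_{CW(r,p)}\lesssim N^\alpha\sqrt{\log N}$ for every $k$. For the lower bound, evaluating on $x(j)=(e_1,\dots,e_N)$ (for which $w_p(x(j))=1$) gives $|T_N(e_{\mathbf{i}})|=1$ for every $\mathbf{i}$, whence $\pi_{s',p}^{\mathrm{mult}}(T_N)\ge N^{m/s'}$. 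The inequality $N^{m/s'}\lesssim N^\alpha\sqrt{\log N}$ then pins down $1/s'$ to the claimed value once $\alpha$ is computed precisely.

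For the second regime, $0<\frac{1}{r}-\frac{m-1}{p^*}\leq \frac{1}{2}$, cotype plays no role and the natural witness is the diagonal form $T_N(x(1),\dots,x(m))=\sum_{i=1}^N\prod_j x_i(j)$. Here the separately $(r,p)$-summing norm is controlled by a direct H\"older computation on the product of the fixed coordinates (with the exponent $p^*/(m-1)$ appearing naturally from the $m-1$ factors), and the lower bound on $\pi_{s',p}^{\mathrm{mult}}(T_N)$ is again obtained by evaluating on the canonical basis, this time exploiting the diagonal structure so that the multilinear sum reduces to an $N$-term sum.

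The main obstacle I anticipate is obtaining the precise KSZ exponent $\alpha$ in the $\ell_{p^*}^N$-geometry so that it matches exactly $\frac{m-1}{2}+\frac{1}{r}-\frac{m-1}{p^*}$ (i.e.\ $m/s$). A crude reduction to $\ell_\infty^N$ via $\|x\|_\infty\le\|x\|_{p^*}$ will be too lossy; a version of KSZ built directly for the $\ell_{p^*}$-norm --- or, equivalently, a sharp computation of the linear $(r,p)$-summing norm of Rademacher-type linear forms on $\ell_{p^*}^N$ through Bennett-type estimates --- is likely required. A secondary difficulty is ensuring the two regimes glue properly at the boundary $\frac{1}{r}-\frac{m-1}{p^*}=\frac{1}{2}$, and that the polylogarithmic slack $\sqrt{\log N}$ coming from KSZ is harmless for the optimality conclusion (it is, since the comparison is in the exponent of $N$).
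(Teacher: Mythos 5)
Your overall strategy (explicit finite-dimensional witnesses plus a comparison of powers of $N$ through the uniform constant in the implication) is reasonable, but the witnesses you choose cannot reach the claimed exponents, and this is not the technical issue you flag but a structural one. Both of your examples are scalar-valued, and every scalar form is separately $(p,p)$-summing --- hence separately $(r,p)$-summing for \emph{every} $r\geq p$ --- with constant at most its operator norm. Consequently, the only information such examples can extract from the implication ``separately $(r,p)$-summing $\Rightarrow$ multiple $(s',p)$-summing'' is a comparison between $\bigl(\sum_{\bi}|T_N(e_\bi)|^{s'}\bigr)^{1/s'}$ and $\|T_N\|$; in other words, they can only show that $s'$ cannot beat the optimal exponent in the scalar Hardy--Littlewood/Praciano-Pereira theorem (Corollary \ref{cor:praciano} with all $p_k=p$), namely $\frac{1}{s_{HL}}=\frac{m-1}{2m}+\frac1m-\frac1{p^*}$ in the first regime and $1-\frac m{p^*}$ in the second. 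For $m\geq 2$ these are strictly larger than the claimed $\frac1s$ as soon as $p>1$, or $p=1$ and $r>1$: indeed $\frac{1}{s_{HL}}-\frac1s=\frac1m\bigl(1-\frac1r\bigr)+\frac{m-2}{p^*}$ in the first regime, and $\frac1p-\frac1r$ in the second. Concretely, the KSZ bound on $(\ell_{p^*}^N)^m$ gives $\|T_N\|\lesssim N^{\frac{m+1}{2}-\frac{m}{p^*}}\sqrt{\log N}$, whereas your exponent matching would require $\frac ms=\frac{m-1}{2}+\frac1r-\frac{m(m-1)}{p^*}$; these coincide only in the Bohnenblust--Hille case $p=r=1$. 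Likewise the diagonal form only yields $\frac1{s'}\leq 1-\frac m{p^*}$, which matches the claim only when $r=p$. No scalar example can encode the parameter $r$, so your plan proves a strictly weaker lower bound than the theorem asserts.

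The missing ingredient is a genuinely $\ell_2$-valued family of examples, and this is exactly how the paper proceeds: choose $u\in[1,2]$ with $\frac1r=\frac1u+\frac1p-1$; by the Bennett--Carl inequalities the inclusion $I_{u,2}:\ell_u\to\ell_2$ is $(r,p)$-summing, so for every bounded $m$-linear $A:\ell_{p^*}\times\cdots\times\ell_{p^*}\to\ell_u$ the composition $I_{u,2}\circ A$ is separately $(r,p)$-summing with constant controlled by $\|A\|$; the best $\rho$ such that all such compositions are multiple $(\rho,p)$-summing is known exactly (Dimant--Sevilla-Peris and Albuquerque--Bayart--Pellegrino--Seoane), and its two regimes coincide with the claimed values of $s$. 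Your KSZ instinct is relevant, but inside those cited results: the random signs are used to bound the norm of $A$ as a map into $\ell_u^{N^m}$, and it is the inclusion $\ell_u\hookrightarrow\ell_2$ that carries the dependence on $r$ which your scalar construction loses.
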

It should be observed that the assumption $\frac1r\leq\frac 1p-\frac 12$ is not a restriction on the possible values of $r$. Indeed, a linear map with values in a cotype $2$ space is always $(2,1)$-summing, hence $(r,p)$-summing with $\frac1r=\frac 1p-\frac 12$.
\begin{proof}
We shall use the following result proved partly in \cite{DiSP14} and partly in \cite{ABPS14}. Let $1\leq u\leq 2$. Define $\rho$ as the best (=smallest) real number such that, for all $m$-linear maps $A:\ell_{p^*}\times\cdots\times \ell_{p^*}\to \ell_u$, the composition $I_{u,2}\circ A$ is multiple $(\rho,p)$-summing where $I_{u,2}$ denotes the identity map from $\ell_u$ into $\ell_2$. Then
\begin{itemize}
\item $\frac 1\rho=\frac 12+\frac 1m\left(\frac 1u-\frac 12-\frac{m}{p^*}\right)$ provided $0<\frac m{p^*}<\frac 1u-\frac12$;
\item $\frac 1\rho=\frac 1u-\frac m{p^*}$ provided $\frac 1u-\frac 12\leq \frac m{p^*}<\frac 1u$.
\end{itemize}
The real numbers $r$ and $p$ being fixed (and satisfying the assumptions of Theorem \ref{thm:opti1}), we fix $u\in [1,2]$ such that $\frac 1r=\frac 1u+\frac 1p-1$. By the Bennett-Carl inequalities, $I_{u,2}$ is $(r,p)$-summing with $\frac 1r=\frac 1u+\frac 1p-1$ so that $I_{u,2}\circ A$ is separely $(r,p)$-summing. Then the optimal $s$ in Theorem \ref{thm:opti1} must satisfy $s\geq \rho$. But using the relation linking $u$, $p$ and $r$, it is easy to see that the condition $\frac{m}{p^*}<\frac 1u-\frac 12$ is equivalent to $\frac{1}{r}-\frac{m-1}{p^*}>\frac 12$ and that the values of $\frac 1\rho$ are exactly the optimal values appearing in Theorem \ref{thm:opti1}.
\end{proof}

\subsection{Optimality for the inclusion theorem}
We now show that, in full generality, Theorem \ref{thm:inclusion} is also optimal.
\begin{theorem}\label{thm:opti2}
 Let $r\geq 2$ and $p=\frac{2r}{r+1}$. Then there exists a bilinear form $T:\ell_2\times\ell_2\to\mathbb C$ which is $(r,p)$-summing and such that, for every $s\geq 2$ and $q\geq p$, it is $(s,q)$-summing
 if and only if 
$$\frac 1s-\frac2{q}\leq\frac{1}{r}-\frac 2p.$$
\end{theorem}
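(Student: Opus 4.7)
The plan is to exhibit the natural bilinear form $T(x,y)=\langle x,y\rangle=\sum_n x_n y_n$ on $\ell_2\times\ell_2$ (well-defined and bounded by Cauchy--Schwarz) and to verify three things: that $T$ is multiple $(r,p)$-summing with $p=2r/(r+1)$; that $(s,q)$-summing follows in the claimed range via the inclusion theorem; and that $T$ fails $(s,q)$-summing outside this range by testing on the canonical basis.

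For the first point, I observe that for each $y\in\ell_2$ the partial map $T(\cdot,y)=\langle\cdot,y\rangle$ is a bounded linear functional on $\ell_2$ of norm $\|y\|$. Every bounded linear functional on a Banach space is $(1,1)$-summing, and the linear inclusion theorem upgrades this to $(p,p)$-summing with the same constant; hence $T$ is separately $(p,p)$-summing. I then apply Theorem~\ref{thm:main2} with $n=2$, $C_k=\{k\}$, $r_k=p_k=p$, cotype exponent $2$ (since $\mathbb{C}$ has cotype $2$), $J=\varnothing$, and $k_0=1$. The identities $2/p-1=1/r$ and $1/p^*=(p-1)/p$ give $\gamma_{1,\varnothing}=r\geq 2$, $\gamma_{k,\{l\}}=p\in(0,2]$, and $|C_l|\gamma_{k,\{l\}}/p_l^*=p-1\leq 1$, so all hypotheses of Theorem~\ref{thm:main2} hold and $T$ is multiple $(r,p)$-summing.

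The ``if'' direction then follows from the inclusion theorem (Theorem~\ref{thm:inclusion}) applied to the multiple $(r,p)$-summing $T$: this produces multiple $(s,q)$-summing for every $q\geq p$ at the boundary $1/s-2/q=1/r-2/p=-1$, and monotonicity in $s$ extends this to every $(s,q)$ with $1/s-2/q\leq -1$.

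For the ``only if'' direction, test with $x^{(k)}=y^{(k)}=e_k$, $k=1,\dots,N$: then $T(e_k,e_l)=\delta_{k,l}$, so the left-hand side of the $(s,q)$-summing inequality equals $N^{1/s}$, while $w_q((e_k)_{k=1}^N)=N^{1/q-1/2}$ in $\ell_2$ when $q<2$ and $1$ when $q\geq 2$. Uniformity in $N$ forces $N^{1/s}\leq CN^{2/q-1}$ for $q<2$, whence $1/s-2/q\leq -1$; when $q\geq 2$, no finite $s$ satisfies $N^{1/s}\leq C$, consistent with $1/s-2/q\leq -1$ being unattainable for $s\geq 2$ and $q\geq 2$. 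The main obstacle is the verification of Theorem~\ref{thm:main2}'s hypotheses in the first step: while the algebra is routine, it depends crucially on the symmetric choice $r_k=p_k=p$ together with the identity $2/p-1=1/r$, and a careless choice of the auxiliary set $J$ would break the argument.
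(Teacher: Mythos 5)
Your proposal is correct and follows essentially the paper's own proof: the same witness $T(x,y)=\sum_i x_iy_i$, the same test on the canonical basis with $w_q\big((e_k)_{k\leq N}\big)=N^{\max\left(\frac1q-\frac12,0\right)}$ giving the necessity of $\frac1s-\frac2q\leq -1$, and the positive part obtained from the paper's general machinery. The only difference is bookkeeping in that positive part: the paper simply invokes Corollary \ref{cor:praciano} (valid for all bilinear forms, so with $p$ replaced by $q$ it also yields the ``if'' direction directly), whereas you re-derive the $(r,p)$-summability by checking the hypotheses of Theorem \ref{thm:main2} and then use Theorem \ref{thm:inclusion} together with monotonicity in $s$; your verifications are correct (the inclusion step needs $q<2$, but for $q\geq 2$ the ``if'' direction is vacuous, as you note).
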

\begin{proof}
 Let $T(x,y)=\sum_{i=1}^{+\infty}x_iy_i$, which has norm 1. Then by Corollary \ref{cor:praciano}, as all bilinear forms, $T$ is $(r,p)$-summing. 
 Conversely, let us assume that it is also $(s,q)$-summing.
 We choose $x=(e_i)_{i=1,\dots,n}$ so that $w_q(x)=n^{\max\left(\frac 1q-\frac 12,0\right)}$. For this choice we get
 $$n^{\frac 1s}=\left(\sum_{i,j=1}^n |T(e_i,e_j)|^s\right)^{\frac 1s}\leq \pi_{s,q}(T)w_q(x)^2\leq \pi_{s,q}(T)n^{\max\left(\frac 2q-1,0\right)}.$$
This implies $q\leq 2$ and $\frac 1s\leq \frac{2}q-1$ namely
$$\frac 1s-\frac 2q\leq \frac 1r-\frac 2p.$$
\end{proof}
In view of this example and P\'erez-Garc\'ia's result, it seems conceivable that something similar does not happen if we start with $r\leq s\leq 2$. This deserves further investigation.

\subsection{Optimality for the product of $p$-Sidon sets}
We finally conclude by proving the optimality of Theorem \ref{thm:sidon}. To simplify the notations, we will only prove it for the product of two sets. We shall work with $G=\Omega=\{-1,1\}^{\mathbb N}$ whose dual group $\Gamma$ is the set of Walsh functions. Recall that if $(r_n)_{n\in\mathbb N}$ is the sequence of Rademacher functions on $\Omega$, defined by $r_n(\omega)=\omega_n$, $\omega\in\Omega$, then the Walsh functions
are the functions $w_A=\prod_{n\in A}r_n$ where $A$ is any finite subset of $\mathbb N$ (in particular, $w_\varnothing=1$). We will prove the following theorem, which clearly implies optimality in Theorem \ref{thm:sidon}.
\begin{theorem}
Let $\Omega=\{-1,1\}^{\mathbb N}$, $\Gamma$ its dual group, $p_1$, $p_2$ rational numbers in $[1,2)$. There exist two subsets $\Lambda_1$, $\Lambda_2$ of $\Gamma$ which are respectively $p_1$-Sidon or $p_2$-Sidon, and such that their direct product $\Lambda_1\times\Lambda_2$ is not $p$-Sidon for 
$$\frac 1p>\frac 12+\frac1{2R}\textrm{ where }R=\frac{p_1}{2-p_1}+\frac{p_2}{2-p_2}.$$
\end{theorem}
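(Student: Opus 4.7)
The plan is to construct explicit $\Lambda_1,\Lambda_2\subset\Gamma$ and a test function on $\Omega\times\Omega$ whose sup-norm and spectral $\ell^q$-norm can be controlled precisely, so as to force any $q$-Sidonicity constant for $\Lambda_1\times\Lambda_2$ to blow up when $q<p$.

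\emph{Step 1 (parameterization and construction of $\Lambda_j$).} Since $p_j\in[1,2)$ is rational, I would write $p_j=2a_j/(a_j+b_j)$ with coprime positive integers $a_j\ge b_j$. A direct check gives $p_j/(2-p_j)=a_j/b_j$, whence $R=a_1/b_1+a_2/b_2$ and the claimed exponent simplifies to $p=2R/(R+1)$. Partition $\mathbb N$ into two disjoint infinite subsets and use the corresponding Rademachers to build $\Lambda_j$: in the special case $b_j=1$ the set $\Lambda_j$ is the family of degree-$a_j$ Walsh functions, which is exactly $p_j$-Sidon by Bohnenblust-Hille. For $b_j>1$, one groups these Rademachers into blocks of size $b_j$ to obtain independent ``block-Rademachers'' and takes $a_j$-fold products of these, with a suitable sparsification to avoid being accidentally more Sidon than $p_j$; the rationality of $p_j$ is crucial here, because the block parameters must be integers.

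\emph{Step 2 (test function and sup-norm estimate).} For a large integer $N$, consider the random polynomial
\[f_\veps(\omega,\omega')=\sum_{\bi,\bj}\veps_{\bi\bj}\,\gamma^{(1)}_{\bi}(\omega)\,\gamma^{(2)}_{\bj}(\omega'),\]
where $\gamma^{(j)}_{\bi}\in\Lambda_j$ enumerates $N^{c_j}$ test characters ($c_j$ governed by the block parameters $a_j,b_j$), and $(\veps_{\bi\bj})$ are independent Bernoulli variables. The spectral $\ell^q$-norm is exactly $\|\widehat{f_\veps}\|_q=N^{(c_1+c_2)/q}$. Then bound $\EE\|f_\veps\|_\infty$ by a Kahane-Salem-Zygmund argument applied iteratively: freeze $\omega'$, apply Khintchine on the $\omega$-side (where $f_\veps(\cdot,\omega')$ is a Rademacher chaos of degree $a_1$ on the $\omega$-Rademachers), and then iterate on the $\omega'$-side. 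Tuning everything so that the resulting exponent in $N$ is $\beta:=(c_1+c_2)/p$ is exactly the extremal matching prescribed by the target $p$-Sidon inequality with $p=2R/(R+1)$.

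\emph{Step 3 (contradiction).} If $\Lambda_1\times\Lambda_2$ were $q$-Sidon with $q<p$, choose a realisation $\veps$ for which $\|f_\veps\|_\infty$ is close to $\EE\|f_\veps\|_\infty$; the $q$-Sidon inequality then gives
\[N^{(c_1+c_2)/q}=\|\widehat{f_\veps}\|_q\le\kappa\,\|f_\veps\|_\infty\lesssim\kappa\,N^{(c_1+c_2)/p},\]
which is impossible as $N\to\infty$. Thus $\Lambda_1\times\Lambda_2$ cannot be $q$-Sidon for any $q<p$, proving the theorem.

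\emph{Main obstacle.} The delicate point is Step 1 combined with Step 2: one needs $\Lambda_j$ to be \emph{exactly} $p_j$-Sidon (not strictly more so, which would happen if one naively took degree-$a_j$ Walsh functions for $b_j>1$), while simultaneously having the Kahane-Salem-Zygmund exponent $\beta$ for $f_\veps$ come out to precisely $(c_1+c_2)/p$. This requires that the block construction's counting and moment estimates mesh together, which is where the rationality of $p_j$ is essential -- it allows one to use integer block parameters that realise the fractional exponent $R=a_1/b_1+a_2/b_2$.
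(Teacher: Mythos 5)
Your Steps 2--3 (random signs, a Kahane--Salem--Zygmund bound for $\EE\|f_\veps\|_\infty$, and then the definition of $p$-Sidonicity applied to a good realisation) are a viable alternative to what the paper actually does: the paper takes the deterministic all-ones polynomial $f_N$, lower-bounds $\|f_N\|_{L^s}$ for $s\approx N^{k_1k_2}$ by pairing with a Riesz product, and invokes a necessary condition for $p$-Sidonicity of the form $\|f\|_{L^s}\leq\kappa\sqrt s\,\|\hat f\|_{2p/(3p-2)}$ (Lemma \ref{lem:necpsidon}, from Blei). With the correct factor sets, your exponent count does match: the test polynomial has $M=N^{m_1k_2+m_2k_1}$ coefficients equal to $\pm1$ but lives on only $d\approx N^{k_1k_2}$ Rademacher coordinates, so a union bound over $2^d$ points gives $\EE\|f_\veps\|_\infty\lesssim\sqrt{Md}$, and $M^{1/q-1/2}\lesssim\kappa\sqrt d$ yields exactly $(\frac1q-\frac12)(m_1k_2+m_2k_1)\leq\frac{k_1k_2}2$, i.e.\ the claimed threshold. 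So the contradiction part of your plan is sound, modulo writing out the subgaussian estimate you only gesture at.

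The genuine gap is your Step 1, and it is the crux, not a technicality. What is needed is, for each rational $p_j=2m_j/(m_j+k_j)$, a set $\Lambda_j$ that is genuinely $p_j$-Sidon \emph{and} supports finite pieces with $\approx d^{m_j/k_j}$ characters built from only $d$ Rademacher generators. Your ``block-Rademachers of size $b_j$, $a_j$-fold products, plus a suitable sparsification'' does not produce this, and your stated worry points in the wrong direction: the naive degree-$a_j$ set is not ``accidentally more Sidon than $p_j$'' --- by optimality of Bohnenblust--Hille it is only $2a_j/(a_j+1)$-Sidon, hence (for $b_j>1$) it fails the hypothesis of being $p_j$-Sidon altogether, while any naive sparsification that restores $p_j$-Sidonicity tends to destroy the character count $M_j\approx d_j^{a_j/b_j}$ that Step 2 requires. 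The paper resolves this tension with Blei's fractional Cartesian products: take the $\binom{m_j}{k_j}$ subsets $S^j_l\subset\{1,\dots,m_j\}$ of cardinality $k_j$, disjoint Rademacher systems $\{\gamma^j_{l,\bk}:\bk\in\mathbb N^{k_j}\}$, and characters $\gamma^j_{1,\Pi_{S^j_1}\bj}\cdots\gamma^j_{n_j,\Pi_{S^j_{n_j}}\bj}$, $\bj\in\mathbb N^{m_j}$; that these sets are $p_j$-Sidon (and nothing better) is a nontrivial theorem quoted from Blei's book, not something your sketch supplies. Without this construction and that input, the proposal does not prove the theorem --- as you yourself flag in your ``main obstacle'' paragraph.
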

\begin{proof}
The proof needs some preparation. First we recall a necessary condition for a subset $\Lambda\subset\Gamma$ to be $p$-Sidon (see \cite[Theorem VII.41]{Bl01}):
\begin{lemma}\label{lem:necpsidon}
Let $\Lambda\subset\Gamma$ and assume that $\Lambda$ is $p$-Sidon. Then there exists $\kappa>0$ such that, for any polynomial $f$ supported on $\Lambda$, for any $s\geq 1$, 
$$\frac{\|f\|_{L^s}}{\sqrt s\|\hat f\|_{\frac{2p}{3p-2}}}\leq\kappa.$$
\end{lemma}
We write $p_1=\frac{2m_1}{m_1+k_1}$ and $p_2=\frac{2m_2}{m_2+k_2}$. Let $S_1^{1},\dots,S_{n_1}^1$ (resp. $S_1^2,\dots,S_{n_2}^2$) the subsets of $\{1,\dots,m_1\}$ (resp. of $\{1,\dots,m_2\}$) with cardinal $k_1$ (resp. $k_2$). Let $E_1^1,\dots,E_{n_1}^1,E_1^2,\dots,E_{n_2}^2$ be pairwise disjoint infinite subsets of the Rademacher system and enumerate each $E_l^\delta$, $\delta\in\{0,1\}$, $l\in\{1,\dots,n_\delta\}$ by $\mathbb N^{k_\delta}$:
$$E_l^\delta=\left\{\gamma_{l,\bj}^\delta;\ \bj\in\mathbb N^{k_\delta}\right\}.$$
Define $\Pi_{S_l^\delta}$ as the projection from $\{1,\dots,m_\delta\}$ onto $S_l^\delta$. We finally consider 
$$\Lambda_\delta=\left\{\gamma^\delta_{1,\Pi_{S_1^\delta}\bj}\cdots \gamma^\delta_{n_\delta,\Pi_{S_{n_\delta}^\delta}\bj};\ \bj\in\mathbb N^{m_\delta}\right\}.$$
It is shown in \cite[p. 465]{Bl01} that $\Lambda_\delta$ is $p_\delta$-Sidon (and nothing better!). We shall prove that $\Lambda_1\times\Lambda_2$ is not $p$-Sidon for 
$$\frac 1p>\frac 12+\frac1{2R}\textrm{ where }R=\frac{p_1}{2-p_1}+\frac{p_2}{2-p_2},$$
namely 
$$\frac 1p>\frac{m_1k_1+m_2k_1+k_1k_2}{2(m_1k_1+m_2k_2)}.$$
To do this, we consider $N$ a large integer and set $N_1=N^{k_2}$ and $N_2=N^{k_1}$ so that $N_1^{k_1}=N_2^{k_2}$. We then define
$$f_N=\sum_{\stackrel{\bj\in\{1,\dots,N_1\}^{m_1}}{\bk\in\{1,\dots,N_2\}^{m_2}}}\gamma^1_{1,\Pi_{S_1^1} \bj}\cdots \gamma^1_{n_1,\Pi_{S_{n_1}^1}\bj}\gamma^2_{1,\Pi_{S_1^2} \bk}\cdots \gamma^2_{n_2,\Pi_{S_{n_2}^2}\bk}$$
which is a polynomial supported on $\Lambda_1\times\Lambda_2$, and the Riesz product
$$R_N=\prod_{l=1}^{n_1}\prod_{\bj\in\{1,\dots,N_1\}^{k_1}}(1+\gamma_{l,\bj}^1)\times\prod_{l=1}^{n_2}\prod_{\bj\in\{1,\dots,N_2\}^{k_2}}(1+\gamma_{l,\bj}^2).$$
Then $\|R_N\|_1=\int R_N=1$ (recall that $R_N$ is positive) whereas $\|R_N\|_2=2^{n_1+N_1^{k_1}+n_2+N_2^{k_2}}=2^{n_1+n_2+2N^{k_1k_2}}$. By interpolation, for any $s>2$, 
$$\|R_N\|_{s^*}\leq 2^{\frac{n_1+n_2+2N^{k_1k_2}}{s}}.$$
On the other hand, by the very definition of $R_N$, $R_N=f_N+Q_N$ where the spectrum of $Q_N$ is disjoint from that of $f_N$. Hence,
$$\int_{\Omega\times\Omega}R_Nf_N=\int_{\Omega\times\Omega}f_N^2=\sum_{\bj,\bk}1^2=N_1^{m_1}N_2^{m_2}=N^{m_1k_2+m_2k_1}.$$
Now, observe that Holder's inequality also yields
$$\left|\int_{\Omega\times\Omega}R_Nf_N\right|\leq \|R_N\|_{s^*}\|f_N\|_s\leq 2^{\frac{n_1+n_2+2N^{k_1k_2}}{s}}\|f_N\|_s.$$
We choose $s=N^{k_1k_2}$ so that one obtains 
$$\|f_N\|_s\geq \kappa N^{m_1k_2+m_2k_1}.$$
In order to apply Lemma \ref{lem:necpsidon} we just compute
$$\|\widehat{f_N}\|_{\frac{2p}{3p-2}}=\left(N_1^{m_1}N_2^{m_2}\right)^{\frac{3p-2}{2p}}=N^{(m_1k_2+m_2k_1)\frac{3p-2}{2p}}.$$
Thus,
$$\frac{\|f_N\|_{L^s}}{\sqrt s\|\hat f_N\|_{\frac{2p}{3p-2}}}\geq\kappa N^{(m_1k_2+m_2k_1)\frac{2-p}{2p}-\frac{k_1k_2}2}.$$
If $\Lambda_1\times\Lambda_2$ is $p$-Sidon, then Lemma \ref{lem:necpsidon} tells us that 
$$\left(\frac 12-\frac 1p\right)(m_1k_2+m_2k_1)-\frac{k_1k_2}2\leq 0$$
which is exactly the desired inequality.
\end{proof}

\providecommand{\bysame}{\leavevmode\hbox to3em{\hrulefill}\thinspace}
\providecommand{\MR}{\relax\ifhmode\unskip\space\fi MR }
\providecommand{\MRhref}[2]{%
  \href{http://www.ams.org/mathscinet-getitem?mr=#1}{#2}
}
\providecommand{\href}[2]{#2}

\end{document}